\let\cref\Cref
\crefname{subsection}{subsection}{subsections}
\Crefname{subsection}{Subsection}{Subsections}
\crefname{thm}{theorem}{theorem}
\crefname{dfn}{definition}{definition}
\crefname{prop}{proposition}{proposition}
\crefname{lem}{lemma}{lemma}
\crefname{cor}{corollary}{corollary}
\crefname{ex}{example}{example}
\Crefname{enumi}{}{}
\Crefname{equation}{}{}
\definecolor{darkblue}{RGB}{0,0,96}
\definecolor{gray}{RGB}{127,127,127}
\definecolor{darkred}{RGB}{160,0,0}
\definecolor{lightyellow}{RGB}{255,255,128}
  \def\unskip{}%
  \def\lambda{lambda}
\newcommand{\qua}{\hskip 0.4em \ignorespaces}
\def\arxiv#1{\relax\ifhmode\unskip\qua\fi
\href{http://arxiv.org/abs/#1}%
{\tt arXiv:\penalty -100\unskip#1}}
\def\ZB#1{\relax\ifhmode\unskip\qua\fi
\href{https://zbmath.org/?q=an:#1}{\tt zb#1}}
\def\xox#1{\csname xx#1\endcsname}
\renewcommand*{\backrefalt}[4]{%
\textcolor[rgb]{0,0,0}{\small
\ifcase #1 %
No citations.%
\or
Cited on page~#2.%
\else
Cited on pages~#2.%
\fi
}}
\newtheoremstyle{mytheorem}{}{}{\upshape}{}{\bfseries}{.}{ }{\thmname{#1}\thmnumber{ #2}\thmnote{ (#3)}}
\theoremstyle{mytheorem}
\newtheorem{thm}{Theorem}
\newtheorem{dfn}[thm]{Definition}
\newtheorem*{rem}{Remark}
\newtheorem{prop}[thm]{Proposition}
\newtheorem{cor}[thm]{Corollary}
\newtheorem*{prf}{Proof}
\newtheorem*{prob}{Problem}
\newenvironment{manualtheorem}[1]{%
  \manualtheoreminner
}{\endmanualtheoreminner} 
\newenvironment{manualproposition}[1]{%
  \manualpropositioninner
}{\endmanualpropositioninner} 
\newenvironment{manualcorollary}[1]{%
  \manualcorollaryinner
}{\endmanualcorollaryinner} 
\newcommand{\vertiii}[1]{{\left\vert\kern-0.25ex\left\vert\kern-0.25ex\left\vert #1 \right\vert\kern-0.25ex\right\vert\kern-0.25ex\right\vert}} 
\newcommand{\subalign}[1]{%
  \vcenter{%
    \Let@ \restore@math@cr \default@tag
    \baselineskip\fontdimen10 \scriptfont\tw@
    \advance\baselineskip\fontdimen12 \scriptfont\tw@
    \lineskip\thr@@\fontdimen8 \scriptfont\thr@@
    \lineskiplimit\lineskip
    \ialign{\hfil$\m@th\scriptstyle##$&$\m@th\scriptstyle{}##$\hfil\crcr
      #1\crcr
    }%
  }%
}
\newcommand{\myqed}{\pushQED{\qed}\qedhere} 
\title{A Lower Bound on the Stable $4$-Genus of Knots}
\author{Damian Iltgen}
\date{}
\address{Fakultät für Mathematik, Universität Regensburg, Germany}
\email{damian.iltgen@ur.de}
\begin{document}

\maketitle

\begin{abstract}
	We present a lower bound on the stable $4$-genus of a knot based on Casson-Gordon $\tau$-signatures. We compute the lower bound for an infinite family of knots, the twist knots, and show that a twist knot is torsion in the knot concordance group if and only if it has vanishing stable $4$-genus.
\end{abstract}

\section{Introduction}

In 2010, Charles Livingston \cite{livstable} introduced a new knot invariant called the \textit{stable $4$-genus $g_{st}$} of a knot $K$, which is defined as
\begin{equation*}
	g_{st}(K) = \lim_{n\rightarrow \infty} \frac{g_4(nK)}{n},
\end{equation*}
where $nK$ denotes the $n$-fold connected sum $K\# \cdots \# K$ and $g_4$ is the (topological) $4$-genus.\footnote{Throughout this paper, we will work in the topological category.} Recall that the $4$-genus of a knot $K$ is defined as the minimal genus over all properly embedded and locally flat surfaces $\Sigma \subset B^4$ with $\partial\Sigma = K$. In general, it is rather difficult to compute the stable $4$-genus of a knot. Most of the knot invariants that give bounds on the $4$-genus, such as the Levine-Tristram signatures \cite{levine1,tristram}, are additive under connected sum, hence they cannot yield bounds that are different for $g_{st}$.

More promising are Casson-Gordon invariants \cite{cg1, cg2}. For instance, Livingston used them in \cite{livstable} to show that a specific satellite construction yields knots whose stable $4$-genus is close to but not greater than~$1/2$. Note that it is an open question whether there exists a knot $K$ such that $0 < g_{st}(K) < 1/2$. In this paper, we use Casson-Gordon invariants once more to construct a lower bound on $g_{st}$. Our results show that already a simple family of knots, the twist knots, contains an infinite subfamily with stable $4$-genus close to but not greater than $1/2$. The main results are as follows.

\begin{manualtheorem}{\cref{mainthm}}[Main Theorem]
	Let $K$ be a knot with $d$-fold branched cover $X_d$ where $d$ is a prime-power, and let $p$ be any prime. If the rational numbers $L_1, L_2, \dots, L_m$ (defined below) have the same sign, and if $\sum_{s=1}^{d-1} \sigma_{s/d}(K) = 0$, where $\sigma_{s/d}(K)$ is the Levine-Tristram signature of $K$ associated to $e^{2\pi is/d}$, then
	\begin{equation*}
		g_{st}(K) \geq \frac{t\cdot L}{4d(p-1)+2(d-1)L},
	\end{equation*}
	where $t \coloneqq \dim H^1(X_d; \mathbb{F}_p)$ and $L \coloneqq \min_{j=1,\dots,m} \vert L_j\vert$.
\end{manualtheorem}

Here, the numbers $L_j$ for $j = 1, \dots, m$, are defined as
\begin{equation*}
	L_j \coloneqq \sum_{\chi\in A_j} \sigma_1(\tau(K, \chi)) \in \mathbb{Q},
\end{equation*}
where $A_1, \dots, A_m \subset H^1(X_d; \mathbb{F}_p)$ are the one-dimensional subspaces of\break$H^1(X_d; \mathbb{F}_p)$, and $\sigma_1(\tau(K, \chi))$ is the Casson-Gordon $\tau$-signature corresponding to $\chi$ (see \cref{seccginv} and \cref{maindef}).

The strength of the bound in \cref{mainthm} depends on the choice of $d$ and $p$. However, $t\cdot L/(4d(p-1)+2(d-1)L)$ is bounded from above by $t/(2(d-1))$, so a priori the best bounds are obtained in the case $d = 2$, i.e.\ when working with the double branched cover $X_2$. Note that if one of the numbers $L_j$ is zero, then also $L = 0$ and \cref{mainthm} will yield the trivial bound $g_{st}(K) \geq 0$.

Note that in \cref{mainthm}, we make the assumption that the sum of Levine-Tristram signatures $\sum_{s=1}^{d-1} \sigma_{s/d}(K)$ of $K$ vanishes. This is needed so that one of our main tools in the proof -- Gilmer's lower bound on the $4$-genus given by Casson-Gordon $\tau$-signatures (see \cref{Gilmer} and \cref{Gilmer2} in \cref{seccginv}) -- admits a more simplified application. The given proof of \cref{mainthm} doesn't hold without this assumption, and we currently do not know if of our methods generalize to the case where $\sum_{s=1}^{d-1} \sigma_{s/d}(K)$ doesn't vanish. However, if any of the Levine-Tristram signatures $\sigma_{s/d}(K)$ is non-zero, then there is the Murasugi-Tristram bound $g_4(nK) \geq \frac{n}{2}\vert\sigma_{s/d}(K)\vert$ \cite{murasugi3, tristram} which implies $g_{st}(K) \geq \frac{1}{2}\vert\sigma_{s/d}(K)\vert$, so we still obtain a non-trivial lower bound on $g_{st}$ when $\sum_{s=1}^{d-1} \sigma_{s/d}(K)$ doesn't vanish.\footnote{In earlier versions of the article on the arXiv, and in the published version in \emph{Algebraic \& Geometric Topology 22-5 (2022)}, \cref{mainprop} and \cref{mainthm} are falsely stated without the assumption that the sum of Levine-Tristram signatures vanishes.}

The main theorem is an immediate consequence of the following proposition.

\begin{manualproposition}{\cref{mainprop}}
	With the same assumptions as in \cref{mainthm},
	\begin{equation*}
		g_4(nK) \geq \frac{nt\cdot L}{4d(p-1)+2(d-1)L}
	\end{equation*}
	for any $n \in \mathbb{N}$.
\end{manualproposition}

For the reader who would like to skip the (quite lengthy) proof of the main theorem without missing out on the key proof technique, we will provide a quick proof of the following introductory result.

\begin{manualproposition}{\cref{propconcorder}}
	Let $K$ be a knot and $p$ a prime such that $H^1(X_d; \mathbb{F}_p)$ is one-dimensional. If $L = \vert L_1\vert > 0$, then $g_4(nK) \neq 0$ for all $n \in \mathbb{N}$.
\end{manualproposition}


As a sample application, we compute the lower bound given by \cref{mainthm} with $d = 2$ for an infinite family of knots, the twist knots $K_n$ (see \cref{fig:twist_knots_with_brace_intro}). The result is as follows.

\begin{figure}[H]
	\centering
	\captionsetup{font=small}
	\def\svgscale{0.24}
\begingroup%
  \makeatletter%
  \providecommand\color[2][]{%
    \errmessage{(Inkscape) Color is used for the text in Inkscape, but the package 'color.sty' is not loaded}%
    \renewcommand\color[2][]{}%
  }%
  \providecommand\transparent[1]{%
    \errmessage{(Inkscape) Transparency is used (non-zero) for the text in Inkscape, but the package 'transparent.sty' is not loaded}%
    \renewcommand\transparent[1]{}%
  }%
  \providecommand\rotatebox[2]{#2}%
  \newcommand*\fsize{\dimexpr\f@size pt\relax}%
  \newcommand*\lineheight[1]{\fontsize{\fsize}{#1\fsize}\selectfont}%
  \ifx\svgwidth\undefined%
    \setlength{\unitlength}{736.59649658bp}%
    \ifx\svgscale\undefined%
      \relax%
    \else%
      \setlength{\unitlength}{\unitlength * \real{\svgscale}}%
    \fi%
  \else%
    \setlength{\unitlength}{\svgwidth}%
  \fi%
  \global\let\svgwidth\undefined%
  \global\let\svgscale\undefined%
  \makeatother%
  \begin{picture}(1,0.57307188)%
    \lineheight{1}%
    \setlength\tabcolsep{0pt}%
    \put(0,0){\includegraphics[width=\unitlength,page=1]{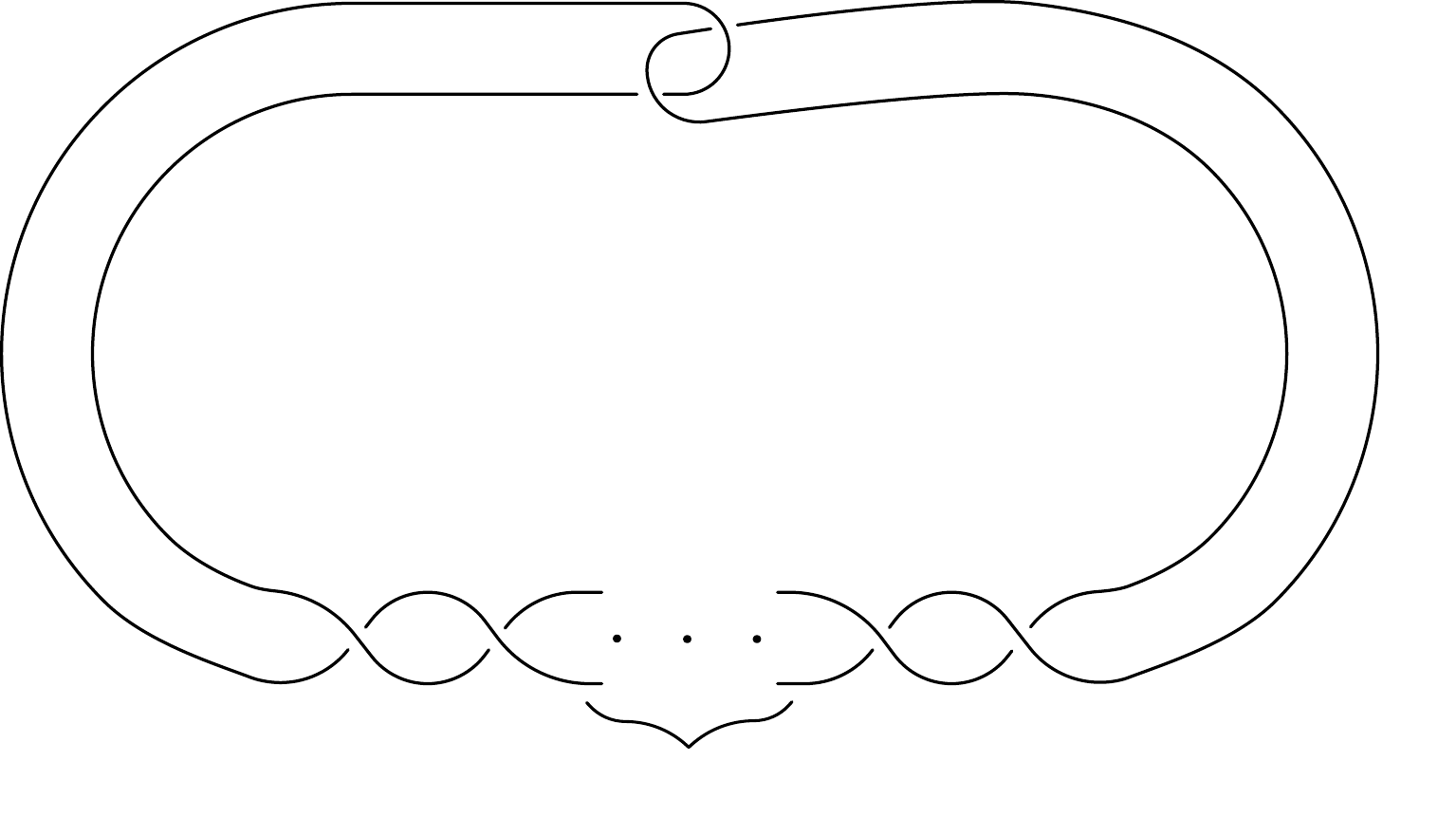}}%
    \put(0.31360419,0.00288182){\color[rgb]{0,0,0}\makebox(0,0)[lt]{\lineheight{1.25}\smash{\begin{tabular}[t]{l}$n$ full twists\end{tabular}}}}%
    \put(0.9102677,0.51605288){\color[rgb]{0,0,0}\makebox(0,0)[lt]{\lineheight{1.25}\smash{\begin{tabular}[t]{l}$K_n$\end{tabular}}}}%
  \end{picture}%
\endgroup%

	\caption{The twist knot $K_n$}
	\label{fig:twist_knots_with_brace_intro}
\end{figure}

\begin{manualcorollary}{\cref{corexmain}}
	Let $K_n$ be the twist knot with $n \in \mathbb{N}\backslash\{0,2\}$ full right hand twists and $p$ a prime dividing $4n+1$. Then
	\begin{equation*}
		g_{st}(K_n) \geq \begin{cases}
		\frac{(pq+q-6)}{2(pq+q+18)}, & (p-1)/2 \textrm{ even} \\
		\frac{p^2q-6p-q-6}{2(p^2q+18p-q-30)}, & (p-1)/2 \textrm{ odd},
	\end{cases}
	\end{equation*}
	where $q = (4n+1)/p$.
\end{manualcorollary}

While \cref{corexmain} is directly obtained from the main theorem, the bounds are not particularly easy to grasp. By estimating further from below, we obtain a single bound which is weaker but easier to grasp and holds for all twist knots $K_n$ simultaneously.

\begin{manualcorollary}{\cref{corexadd}}[\cref{corexmain}, weakened]
	Let $K_n$ with $n \in \mathbb{N}$ be any twist knot. Then
	\begin{equation*}
		g_{st}(K_n) \geq \frac{1}{2} - \frac{6}{2n+7}.
	\end{equation*}
\end{manualcorollary}

It is straightforward to see that for growing $n$, the bound given in \cref{corexadd}, and in fact also the stronger bounds in \cref{corexmain}, tend towards $1/2$. This means in particular that our bounds for the twist knots lie in the interval $[0, \frac{1}{2})$. Since the Levine-Tristram signatures of all twist knots vanish, these are the best bounds that we currently know.

Note that the first three twist knots $K_0$, $K_1$ and $K_2$ form a special case. Casson and Gordon proved \cite{cg1, cg2} that the unknot $K_0$ and the so-called Stevedore knot $K_2$ are the only slice knots among the twist knots, and so the stable $4$-genus of $K_0$ and $K_2$ vanishes. The knot $K_1$ represents the figure-eight knot, and it is well known that the figure-eight has order $2$ in the knot concordance group $\mathcal{C}$. It follows that the stable $4$-genus of $K_1$ vanishes as well. This coincides with the trivial bound obtained from \cref{corexmain} in those cases. In fact, $K_0$, $K_1$ and $K_2$ are the only twist knots with $L = 0$ (see \cref{secex}), which means that for any other twist knot, a non-trivial lower bound can be obtained from \cref{corexmain} (resp.\ \cref{mainthm}). This establishes the following corollaries.

\begin{manualcorollary}{\cref{corextorsion}}
	Let $K_n$ be any twist knot. Then
	\begin{equation*}
		K_n \textrm{ is torsion in $\mathcal{C}$} \iff g_{st}(K_n) = 0.
	\end{equation*}
\end{manualcorollary}

\begin{manualcorollary}{\cref{corexconcorder}}
	$K_n$ is of infinite order in $\mathcal{C}$ except for $n = 0, 1, 2$.
\end{manualcorollary}

It is not known whether \cref{corextorsion} holds for arbitrary knots $K$.

\begin{prob}[Livingston \cite{livstable}]
	Given a knot $K$, does $g_{st}(K) = 0$ imply that $K$ is torsion in the knot concordance group $\mathcal{C}$?
\end{prob}

\cref{corexconcorder} gives a full description of the concordance order of twist knots in the topological category. This result was to the best knowledge of the author not completely known before, despite the vast work that has been done on the topological concordance order of twist knots in the past. We would like to mention at this point some of the previously obtained results. First and foremost, Casson and Gordon \cite{cg1, cg2} showed that if $K_n$ is algebraically slice and $n \neq 0, 2$, then $K_n$ has infinite order in the knot concordance group $\mathcal{C}$. Livingston and Naik \cite{lntor1, lntor2} obtained the same result for all twist knots of algebraic concordance order $4$. The remaining case of algebraic order $2$ was partially solved by Davis \cite{davistor} and Tamulis \cite{tamulis}, who obtained infinite concordance order of $K_n$ if $n = x^2+x+1$ with $x \geq 2$ \cite[Corollary 6]{davistor}, or if $4n+1$ is prime with $n \geq 3$ \cite[Theorem 1.1]{tamulis}. With our \cref{corexconcorder}, all other cases are now solved.

A recent result by Baader and Lewark \cite{bls4g} shows that $g_{st}(K_n) \leq 2/3$ for every $n \in \mathbb{N}$. In fact, this bound can be improved for certain $n$ as will be shown in \cref{secexupperbound}.

\begin{manualproposition}{\cref{propex}}
	Let $n \in \mathbb{N}$ such that the negative Pell equation\break $x^2 - (4n+1)y^2 = -1$ has a solution $x, y \in \mathbb{Z}$. Then
	\begin{equation*}
		g_{st}(K_n) \leq \frac{1}{2}.
	\end{equation*}
\end{manualproposition}

As shown by Rippon and Taylor in \cite{rtpell}, the negative Pell equation $x^2 - (4n+1)y^2 = -1$ has a solution if and only if the continued fraction of $\sqrt{4n+1}$ has odd period length. This is the case, for example, if $4n+1 = p^k$, where $p$ is a prime such that $p \equiv 1 \mod 4$ and $k \in \mathbb{N}$ is odd \cite{rtpell}. This yields the infinite family of twist knots whose stable $4$-genus is close to but not greater than $1/2$.

\subsection{Smooth vs.\ topological setting}\label{secsmoothtop}

We would like to make a short remark about the situation in the smooth setting. Since the smooth stable $4$-genus is always greater than or equal to the topological stable $4$-genus, our result also applies in the smooth setting. When it comes to potential lower bounds obtained by smooth knot invariants such as the Rasmussen $s$- and $\tau$-invariant, or the Ozsváth-Stipsicz-Szabó $\Upsilon$-invariant, we face the same problem as with the Levine-Tristram signatures in the topological setting: all these invariants are additive under connected sum. In particular, no better lower bounds in the smooth setting are known.

Regarding the results about twist knots, \cref{corexmain} still holds in the smooth setting. This is no longer true for \cref{propex}: the upper bound is obtained by using machinery that is exclusive to the topological setting and is therefore no longer valid. However, a result by Baader and Lewark \cite{bls4g} implies that the smooth stable $4$-genus of twist knots is strictly smaller than $1$, so we can still say that it is contained in the interval $[0, 1)$. Since our lower bounds also hold in the smooth setting, we can further say that $K_0$, $K_1$ and $K_2$ are the only twist knots with smooth stable $4$-genus equal to zero. Other information is not known. In particular, we cannot tell whether the topological and smooth stable $4$-genus of twist knots coincide or not, except in the case of $K_0$, $K_1$ and $K_2$ where they are the same. This raises the following open question.

\begin{prob}
	Compute the smooth and topological stable $4$-genus for any twist knot $K_n$ with $n \geq 3$.
\end{prob}

Twist knots form one of the simplest family of knots. The fact that we don't know the exact value of the stable $4$-genus for any twist knot (other than the first three) shows how difficult it is to determine $g_{st}$ in general. Moreover, to the best knowledge of the author all known exact values of the stable $4$-genus are integers. This raises further the following question.

\begin{prob}
	Does there exist either in the smooth or topological setting a knot $K$ such that $g_{st}(K) \notin \mathbb{Z}$ and for which the exact value of the stable $4$-genus can be computed?
\end{prob}

We would like to conclude this discussion with a remark regarding \cref{corexconcorder}. In the smooth category, Lisca \cite{lisca} gave a complete description of the concordance order of $2$-bridge knots, a class to which the twist knots belong. It follows from his result that the only twist knots of finite order in the smooth concordance group are $K_0$, $K_1$, and $K_2$. This coincides with the results obtained in the present work and shows that the concordance order of the twist knots is the same in the topological and smooth category.

\subsection{Organization}\label{secorganization}

The paper is structured as follows. In \cref{secprelim}, we state the tools and definitions needed in later parts. \cref{secmain} forms the main part of the paper and is occupied with the proof of \cref{mainprop} and \cref{mainthm}. Here we also prove \cref{propconcorder}. In \cref{secex}, we compute the lower bound for $g_{st}$ for the twist knots $K_n$ and show that there is an infinite subfamily with stable $4$-genus close to but not greater than $1/2$.

\section*{Acknowledgments}

The author would like to sincerely thank his advisor Lukas Lewark for helpful discussions and remarkable advice during the process of writing this paper. The author would further like to thank Anthony Conway, Stefan Friedl, Charles Livingston and Allison Miller for reading a preliminary version of the paper and giving final comments and advice. The author is supported by the Emmy Noether Programme of the DFG, project number 412851057.

\section{Preliminaries}\label{secprelim}

The purpose of this section is to briefly review the definitions and notions of the objects that are used throughout the paper. References include \cite{cg1, cg2, conwaycg, gilmer3, gilmer2, livnaik}.

\subsection{Linking forms and metabolizers}\label{seclfm}

We follow the definitions of \cite{gilmer2}. Let $G$ be a finite abelian group. A \textit{linking form} on $G$ is a symmetric bilinear map
\begin{equation*}
	\alpha\colon G \times G \rightarrow \mathbb{Q}/\mathbb{Z}
\end{equation*}
which is non-degenerate, i.e.\ the correlation map $c\colon G \rightarrow \textrm{Hom}(G, \mathbb{Q}/\mathbb{Z})$ is an isomorphism. If $H \subseteq G$ is a subgroup, define
\begin{equation*}
	H^\perp \coloneqq \{g \in G \ | \ \alpha(g, h) = 0 \textrm{ for all } h \in H\}.
\end{equation*}
If there is a subgroup $H \subseteq G$ such that $H = H^\perp$, then $\alpha$ is called \textit{metabolic} and $H$ is called a \textit{metabolizer}. If $H$ is a metabolizer for $\alpha$, then
\begin{equation*}
	\vert H\vert^2 = \vert G\vert,
\end{equation*}
which is a consequence of $H = H^\perp$ and $c$ being an isomorphism.

It will be convenient for our purposes to consider the above in a slightly different setting. Given a prime $p$, let $V$ be a finite-dimensional vector space over $\mathbb{F}_p$, the finite field with $p$ elements. If $V$ can be embedded in $G$ as a finite abelian group, then $\alpha$ induces a linking form $\overline\alpha$ on $V$ in the following way. After choosing an embedding $\varphi\colon V \hookrightarrow G$, we define $\overline\alpha$ to be the restriction of $\alpha$ to the subgroup $\varphi(V) \subseteq G$. In other words,
\begin{equation*}
	\overline\alpha \coloneqq \alpha \circ (\varphi\times \varphi).
\end{equation*}
Note that the subgroup $\varphi(V)$ consists only of $p$-torsion. In general, the induced form $\overline\alpha$ depends on the chosen embedding $\varphi$ and will no longer be non-degenerate. However, any two forms obtained in this way are isometric provided that their domains are the same: if $\varphi_1, \varphi_2\colon V \hookrightarrow G$ are two embeddings with $\varphi_1(V) = \varphi_2(V)$ inducing the forms $\overline\alpha_1$ and $\overline\alpha_2$, then
\begin{equation*}
	f\colon (V, \overline\alpha_1) \rightarrow (V, \overline\alpha_2),\quad f = \varphi_2^{-1} \circ \varphi_1
\end{equation*}
defines an isometry between them. Given such an induced form $\overline\alpha$, we can similarly define $F^\perp$ for a given subspace $F \subseteq V$. If $F$ is such that $F = F^\perp$, then $F$ will be called a \textit{generalized metabolizer} for $\overline \alpha$. If $F$ is a generalized metabolizer, then
\begin{equation*}
	2\dim F \geq \dim V.
\end{equation*}
In particular, a generalized metabolizer $F$ can consist of the entire space $V$.

\subsection{Casson-Gordon invariants and $\tau$-signatures}\label{seccginv}

Let $K$ be a knot with $d$-fold branched cover $X_d$, where $d$ is a power of a prime. Given a character $\chi\colon H_1(X_d; \mathbb{Z}) \rightarrow \mathbb{Q}/\mathbb{Z}$ of prime-power order, Casson and Gordon \cite{cg1, cg2} defined in the 70's an invariant $\tau(K, \chi)$ which is an element in the rationalized Witt group $W(\mathbb{C}(t)) \otimes \mathbb{Q}$ (see for instance \cite{cg1, cg2, conwaycg} for the definition). This invariant yields an obstruction to the sliceness of a knot \cite{cg1}, and one of the most famous applications is the proof that the only slice knots among the twist knots are the unknot and the Stevedore knot \cite{cg1, cg2}.

Given $\lambda \in S^1$, there is a signature homomorphism $\sigma_\lambda\colon W(\mathbb{C}(t)) \otimes \mathbb{Q} \rightarrow \mathbb{Q}$ (see \cite{cg1, conway}) that can be used to obtain further results about the $4$-genus of a knot when applied to the Casson-Gordon invariant $\tau(K, \chi)$.

In order to state one of our main tools, we need to define a certain linking form $\beta$ on $H^1(X_d; \mathbb{Q}/\mathbb{Z})$ which is closely related to the geometric linking form $l$ on $H_1(X_d; \mathbb{Z})$. The definition is as follows. Let $\xi\colon H^1(X_d; \mathbb{Z}) \rightarrow H^2(X_d; \mathbb{Q}/\mathbb{Z})$ be the Bockstein homomorphism associated to the short exact sequence $0 \rightarrow \mathbb{Z} \rightarrow \mathbb{Q} \rightarrow \mathbb{Q}/\mathbb{Z} \rightarrow 0$ of coefficients in cohomology. Note that $\xi$ is in fact an isomorphism since $X_d$ is a rational homology sphere. Let $\Phi$ be the composition
\begin{equation*}
	H_1(X_d; \mathbb{Z}) \overset{\textrm{PD}^{-1}}\rightarrow H^2(X_d; \mathbb{Z}) \overset{\xi^{-1}}\rightarrow H^1(X_d; \mathbb{Q}/\mathbb{Z}) \overset{\textrm{ev}}\rightarrow \textrm{Hom}(H_1(X_d; \mathbb{Z}), \mathbb{Q}/\mathbb{Z}),
\end{equation*}
where $\textrm{PD}$ is Poincaré duality, $\xi^{-1}$ is the inverse of the Bockstein map, and $\textrm{ev}$ is the Kronecker evaluation map (see \cite{cfhlf} for more details). The \textit{geometric linking form} $l$ is defined as
\begin{equation*}
	l\colon H_1(X_d; \mathbb{Z}) \times H_1(X_d; \mathbb{Z}) \rightarrow \mathbb{Q}/\mathbb{Z},\quad l(a,b) = \Phi(a)(b).
\end{equation*}
Finally, we define $\beta$ to be the form
\begin{equation*}
	\beta\colon H^1(X_d; \mathbb{Q}/\mathbb{Z}) \times H^1(X_d; \mathbb{Q}/\mathbb{Z}) \rightarrow \mathbb{Q}/\mathbb{Z},\quad \beta(x,y) = -l(\nu(x), \nu(y)),
\end{equation*}
where $\nu \coloneqq \textrm{PD}\circ\xi$. Since $H^1(X_d; \mathbb{Q}/\mathbb{Z}) \cong \textrm{Hom}(H_1(X_d; \mathbb{Z}), \mathbb{Q}/\mathbb{Z})$ by universal coefficients, $\beta$ can be seen as the dual form of the geometric linking form $l$ on $(H^1(X_d; \mathbb{Z}))^* = \textrm{Hom}(H_1(X_d; \mathbb{Z}), \mathbb{Q}/\mathbb{Z})$. The following theorem is due to Gilmer \cite{gilmer2}.

\begin{thm}[Gilmer]\label{Gilmer}
	Let $K$ be a knot with $g_4(K) = g$. Then\break $(H^1(X_d; \mathbb{Q}/\mathbb{Z}), \beta)$ splits as direct sum $(B_1 \oplus B_2, \beta_1 \oplus \beta_2)$ such that
	\begin{enumerate}
		\item $\beta_1$ has an even presentation with rank $2(d-1)g$ and signature \break $\sum_{s=1}^{d-1} \sigma_{s/d}(K)$;
		\item $\beta_2$ has a metabolizer $H$ such that for every $\chi \in H$ of prime-power order,
			\begin{equation*}
				\vert \sigma_1(\tau(K, \chi)) + \sum_{s=1}^{d-1}\sigma_{s/d}(K)\vert \leq 2dg.
			\end{equation*}
	\end{enumerate}
\end{thm}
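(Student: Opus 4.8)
The plan is to exhibit $X_d$ as the boundary of the $d$-fold cyclic cover $W$ of $B^4$ branched along a genus-$g$ surface bounding $K$, and then to read off both the algebraic splitting of $\beta$ and the signature estimate from the intersection form of $W$.

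First I would fix a locally flat, properly embedded surface $\Sigma \subset B^4$ of genus $g$ with $\partial \Sigma = K$, and let $W$ be the $d$-fold cyclic cover of $B^4$ branched over $\Sigma$; since $d$ is a prime power this cover exists and $\partial W = X_d$. An Euler-characteristic count gives $\chi(W) = d - (d-1)\chi(\Sigma) = 1 + 2(d-1)g$, and since $W$ is a rational homology $B^4$ with $b_1(W;\mathbb{Q}) = 0$, this forces $b_2(W;\mathbb{Q}) = 2(d-1)g$. The intersection form $\lambda_W$ on $H_2(W;\mathbb{Z})/\mathrm{tors} \cong \mathbb{Z}^{2(d-1)g}$ is the candidate even form presenting $\beta_1$; the key structural input here is that branching over the \emph{orientable} surface $\Sigma$ makes $\lambda_W$ even, which I would establish by exhibiting a spin structure on $W$ (equivalently, checking that the relevant characteristic element vanishes).

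Second, I would produce the splitting $\beta \cong \beta_1 \oplus \beta_2$ from the algebra of the inclusion $X_d \hookrightarrow W$. Setting $P := \ker\big(H_1(X_d;\mathbb{Z}) \to H_1(W;\mathbb{Z})\big)$, a ``half lives, half dies'' argument applied to the linking form $l$ shows $P \subseteq P^{\perp}$, with $P^{\perp}/P$ carrying a nondegenerate form presented precisely by $\lambda_W$. Dualizing via universal coefficients ($H^1(X_d;\mathbb{Q}/\mathbb{Z}) \cong \mathrm{Hom}(H_1(X_d;\mathbb{Z}),\mathbb{Q}/\mathbb{Z})$), the filtration $P \subseteq P^{\perp} \subseteq H_1(X_d;\mathbb{Z})$ transports to an orthogonal splitting of $(H^1(X_d;\mathbb{Q}/\mathbb{Z}),\beta)$ into the summand $(B_1,\beta_1)$ coming from $P^\perp/P$ (even presentation, rank $2(d-1)g$) and a metabolic summand $(B_2,\beta_2)$ whose metabolizer $H$ is the annihilator of $P$.

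Third, and this is the heart of the argument, I would prove the signature inequality for $\chi \in H$ of prime-power order. Because such a $\chi$ dies in $H_1(W)$, it extends to a character $\tilde\chi$ of (a punctured) $W$, so the defining property of the Casson--Gordon invariant applies: $\sigma_1(\tau(K,\chi)) = \operatorname{sign}^{\tilde\chi}(W) - \operatorname{sign}(W)$, where $\operatorname{sign}^{\tilde\chi}(W)$ is the signature of the twisted intersection form. Two facts then finish the bound. On one hand, $\operatorname{sign}(W) = \sum_{s=1}^{d-1}\sigma_{s/d}(K)$ by Viro's formula, and this is independent of the chosen surface because any two choices glue to a null-homologous closed surface in $S^4$, forcing the $G$-signature contribution to vanish; this is exactly the correction term in the statement. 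On the other hand, $|\operatorname{sign}^{\tilde\chi}(W)| \le b_2^{\tilde\chi}(W)$, and since the twisted Euler characteristic equals the untwisted one while the genus-$g$ branch locus bounds the twisted first and third Betti numbers by $g$ each, one gets $b_2^{\tilde\chi}(W) \le 2(d-1)g + 2g = 2dg$. Combining yields $|\sigma_1(\tau(K,\chi)) - \sum_{s=1}^{d-1}\sigma_{s/d}(K)| \le 2dg$.

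The hard part will be the third step, specifically two points that the prime-power hypotheses are there to control. First, one must justify the extension of $\chi$ over $W$ and the precise Casson--Gordon identity with its normalization --- this is where the order of $\chi$ being a prime power (matching the prime power $d$) is essential, via $p$-torsion control of $H_1$ of the branched-cover complement. Second, the Betti-number bookkeeping giving $b_1^{\tilde\chi}(W), b_3^{\tilde\chi}(W) \le g$ must be made rigorous, as it is what upgrades the untwisted rank $2(d-1)g$ to the twisted bound $2dg$. The evenness of $\lambda_W$ in the first step is the remaining delicate point, since it is precisely what marks $\beta_1$ as even and feeds into the downstream genus estimate.
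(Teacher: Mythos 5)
The paper does not prove this statement: Theorem \ref{Gilmer} is quoted (in the author's notation) from Gilmer's \emph{On the slice genus of knots} \cite{Gilmer2}, so there is no in-paper argument to compare against. Your outline is essentially a reconstruction of Gilmer's original proof: the $d$-fold cover $W$ of $B^4$ branched over a genus-$g$ surface, the Euler-characteristic count giving $b_2(W)=2(d-1)g$, the presentation of (part of) the linking form by the intersection form of $W$, the extension over $W$ of characters killed by $H_1(X_d)\to H_1(W)$, and twisted-signature estimates. That is the right route, and indeed the standard one.

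Two concrete problems remain. First, a small but symptomatic one: $W$ is \emph{not} a rational homology $B^4$ (its $b_2$ is $2(d-1)g$); what you actually need is $b_1(W;\mathbb{Q})=b_3(W;\mathbb{Q})=0$, and that is precisely where the hypothesis that $d$ is a prime power enters (via the $\mathbb{Z}_p$-coefficient transfer argument for cyclic covers), not merely in the existence of the cover. Second, and more seriously, the decisive third step does not close as written. If $\sigma_1(\tau(K,\chi))=\operatorname{sign}^{\tilde\chi}(W)-\operatorname{sign}(W)$ and $\operatorname{sign}(W)=\sum_{s=1}^{d-1}\sigma_{s/d}(K)$, then $\sigma_1(\tau(K,\chi))-\sum_{s}\sigma_{s/d}(K)=\operatorname{sign}^{\tilde\chi}(W)-2\operatorname{sign}(W)$, and $\vert\operatorname{sign}^{\tilde\chi}(W)\vert\le 2dg$ only bounds this by $2dg+4(d-1)g$. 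The correction term in the theorem is supposed to \emph{cancel} the untwisted signature of $W$, so the conventions must be arranged so that exactly one copy of $\operatorname{sign}(W)$ survives; as stated you have double-counted it. Relatedly, your twisted Betti count gives $b_2^{\tilde\chi}(W)=1+2(d-1)g+b_1^{\tilde\chi}(W)+b_3^{\tilde\chi}(W)\le 2dg+1$ under your claimed bounds of $g$ each, which overshoots the constant. These are exactly the places where Gilmer does careful work, and where the evenness of the intersection form and the orthogonal (not merely filtered) splitting of $\beta$ also require separate arguments that you have only gestured at. The architecture is right; the quantitative conclusion is not yet derived.
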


Here, $\sigma_{s/d}(K)$ denotes the Levine-Tristram signature of $K$ associated to $e^{2\pi is/d} \in S^1$. The signature $\sigma_1(\tau(K, \chi))$ is sometimes referred to as the Casson-Gordon $\tau$-signature of $K$.\footnote{Not to be confused with the Casson-Gordon signature, which is usually reserved for the invariants defined in \cite{cg2}.}

It will be convenient for our purposes to consider only elements in\break $H^1(X_d; \mathbb{Q}/\mathbb{Z})$ (resp.\ $H$) that have prime order. For this, let $p$ be any prime (for example one that divides the order of $H^1(X_d; \mathbb{Q}/\mathbb{Z})$), and consider the vector space $H^1(X_d; \mathbb{F}_p)$. Using the canonical embedding $\psi\colon \mathbb{F}_p \rightarrow \mathbb{Q}/\mathbb{Z}$, defined by $1 \mapsto \frac{1}{p}$, we obtain a diagram that defines an embedding $\varphi$ of $H^1(X_d; \mathbb{F}_p)$ into $H^1(X_d; \mathbb{Q}/\mathbb{Z})$:
\begin{center}
\begin{tikzcd}
	H^1(X_d; \mathbb{Q}/\mathbb{Z}) \arrow[r, "\cong"] & \textrm{Hom}(H_1(X_d; \mathbb{Z}), \mathbb{Q}/\mathbb{Z}) \\
	H^1(X_d; \mathbb{F}_p) \arrow[u, hook', "\varphi"] \arrow[r, "\cong"] & \textrm{Hom}(H_1(X_d; \mathbb{Z}), \mathbb{F}_p) \arrow[u, hook', "\widetilde\psi"'].
\end{tikzcd}
\end{center}
Here, the two horizontal isomorphisms are given by the universal coefficient theorem and $\widetilde\psi$ is the map induced by postcomposition with $\psi$. As described in \cref{seclfm}, the linking form $\beta$ now induces a linking form $\overline\beta$ on $H^1(X_d; \mathbb{F}_p)$ by setting $\overline\beta \coloneqq \beta \circ (\varphi \times \varphi)$. Note that this form will in general no longer be non-degenerate, and that $\varphi(H^1(X_d; \mathbb{F}_p))$ forms the entire $p$-torsion subgroup of $H^1(X_d; \mathbb{Q}/\mathbb{Z})$. Since $\varphi$ is injective, we can uniquely identify the elements of $H^1(X_d; \mathbb{F}_p)$ with the elements of $H^1(X_d; \mathbb{Q}/\mathbb{Z})$ of order $p$. In particular, we obtain a well-defined Casson-Gordon $\tau$-signature for the elements in $H^1(X_d;\mathbb{F}_p)$ by setting $\sigma_1(\tau(K, \chi)) \coloneqq \sigma_1(\tau(K, \varphi(\chi))$ for $\chi \in H^1(X_d; \mathbb{F}_p)$. This allows us to translate \cref{Gilmer} into this setting as follows.

\begin{cor}\label{Gilmer2}
	Let $K$ be a knot with $g_4(K) = g$ and $p$ a prime. Then $(H^1(X_d; \mathbb{F}_p), \overline\beta)$ splits as a direct sum $(G_1 \oplus G_2, \gamma_1 \oplus \gamma_2)$ such that
	\begin{enumerate}
		\item the dimension of $G_1$ over $\mathbb{F}_p$ is at most $2(d-1)g$;
		\item $\gamma_2$ has a generalized metabolizer $F$ such that for every $\chi \in F$,
			\begin{equation*}
				\vert \sigma_1(\tau(K, \chi)) + \sum_{s=1}^{d-1}\sigma_{s/d}(K)\vert \leq 2dg.\tag{$*$}\label{eq:gilmercor}
			\end{equation*}
	\end{enumerate}
\end{cor}

The most important difference is that the inequality \cref{eq:gilmercor} now holds for every $\chi \in F$, meaning that we no longer have to make the distinction between elements of arbitrary order and prime-power order.

We conclude this section with a short remark about connected sums. As before, let $X_d$ be the $d$-fold branched cover of $K$ where $d$ is a prime-power and consider the $n$-fold connected sum $nK$. The $d$-fold branched cover of $nK$ is a connected sum of $n$ copies of $X_d$, and the first cohomology splits accordingly. Litherland \cite{litherland2} showed that if $\chi = (\chi_1, \dots, \chi_n) \in H^1(nX_d; \mathbb{Q}/\mathbb{Z})$, then
\begin{equation*}
	\tau(nK, \chi) = \bigoplus_{i=1}^n \tau(K, \chi_i).
\end{equation*}
In particular,
\begin{equation*}
	\sigma_\lambda(\tau(nK, \chi)) = \sum_{i=1}^n \sigma_\lambda(\tau(K, \chi_i))
\end{equation*}
since $\sigma_\lambda$ is a homomorphism. In short, Casson-Gordon invariants and $\tau$-signatures behave well under connected sum. Note that the above also holds with $\mathbb{F}_p$-coefficients instead of $\mathbb{Q}/\mathbb{Z}$-coefficients.

\section{Main Result}\label{secmain}

Let $K$ be a knot with $d$-fold branched cover $X_d$, where $d$ is a prime-power, and $p$ a prime. Consider the vector space $H^1(X_d; \mathbb{F}_p)$. Every element $\chi \in H^1(X_d; \mathbb{F}_p)$ can be considered as a map $\chi\colon H_1(X_d; \mathbb{Z}) \rightarrow \mathbb{F}_p$ via universal coefficients, and to each such element there is the Casson-Gordon $\tau$-signature $\sigma_1(\tau(K, \chi))$ as mentioned in \cref{seccginv}. Since $H^1(X_d; \mathbb{F}_p)$ is finite, there are only finitely many Casson-Gordon $\tau$-signatures. We make the following definition.

\begin{dfn}\label{maindef}
	Assume that $H^1(X_d; \mathbb{F}_p)$ is non-trivial. Let $A_1, A_2, \dots, A_m$ be the one-dimensional subspaces of $H^1(X_d; \mathbb{F}_p)$. Define, for $j = 1, \dots, m$,
	\begin{equation*}
		L_j \coloneqq \sum_{\chi \in A_j} \sigma_1(\tau(K, \chi)) \in \mathbb{Q}.
	\end{equation*}
	Moreover, we set $L \coloneqq \min_{j=1,\dots,m} \vert L_j\vert$. If $H^1(X_d; \mathbb{F}_p)$ is trivial, we define $L \coloneqq 0$.
\end{dfn}
Note that $m = (p^t-1)/(p-1)$ with $t \coloneqq \dim H^1(X_d; \mathbb{F}_p)$ whenever $H^1(X_d; \mathbb{F}_p)$ is non-trivial.

Before we move on, let us quickly prove an introductory result as a warm-up exercise. This will not only show one of the key ideas used later on in a simplified context, but also allow the reader who would like to skip the (quite lengthy) proof of the main result to not miss out on the main proof technique.

\begin{prop}\label{propconcorder}
	Let $K$ be a knot and $p$ a prime such that $H^1(X_d; \mathbb{F}_p)$ is one-dimensional. If $L = \vert L_1\vert > 0$, then $g_4(nK) \neq 0$ for all $n \in \mathbb{N}$.
\end{prop}

\begin{proof}
	Let $n \in \mathbb{N}$ be a natural number and consider the $n$-fold connected sum $nK$. We would like to show with the given assumptions that $nK$ is not slice, i.e.\ $g_4(nK) \neq 0$.
	
	If any of the Levine-Tristram signatures $\sigma_{s/d}(K)$ is non-zero, then the Murasugi-Tristram bound $g_4(nK) \geq \frac{n}{2}\vert\sigma_{s/d}(K)\vert$ holds \cite{murasugi3, tristram}, so we may assume in the following that $\sum_{s=1}^{d-1}\sigma_{s/d}(K) = 0$. Let $\chi \in H^1(nX_d; \mathbb{F}_p)$ be a non-zero element. We claim that there exists a $k \in \mathbb{Z}$ such that \break$\sigma_1(\tau(nK, k \cdot \chi)) \neq 0$. Indeed, take the canonical basis on $H^1(nX_d; \mathbb{F}_p)$ given by the decomposition
	\begin{equation*}
		H^1(nX_d; \mathbb{F}_p) = \underbrace{H^1(X_d; \mathbb{F}_p) \oplus \dots \oplus H^1(X_d; \mathbb{F}_p)}_{n \textrm{ times}}
	\end{equation*}
	and let $R$ be the number of non-zero components of $\chi$ with respect to this basis. Then
	\begin{equation*}
		\bigg\vert\sum_{\ell = 1}^{p-1} \sigma_1(\tau(nK, \ell \cdot \chi))\bigg\vert = R\cdot L > 0.
	\end{equation*}
	Now, by \cref{Gilmer2}, if $nK$ was slice, there would exist a subspace $F \subset H^1(nX_d; \mathbb{F}_p)$ consisting only of elements with vanishing Casson-Gordon $\tau$-signature. However, we have just shown that every non-zero element in $H^1(nX_d; \mathbb{F}_p)$ has a multiple with non-zero $\tau$-signature. Thus $nK$ is not slice, and the result follows.
\end{proof}

Let us now get back to the proof of our main result. We start with the following technical proposition.

\begin{prop}\label{propadd}
	Let $K$ be a knot. If the rational numbers $L_1, L_2, \dots, L_m$ have the same sign and $L \neq 0$, then for any given $g \in \mathbb{N}$, there exists some $N \in \mathbb{N}$ such that $g_4(nK) > g$ for all $n \geq N$.
\end{prop}

\begin{proof}
	Fix some $g \in \mathbb{N}$ and consider the connected sum $nK$ for some $n \in \mathbb{N}$. If one of the Levine-Tristram signatures $\sigma_{s/d}(K)$ is non-zero, then the Murasugi-Tristram bound $g_4(nK) \geq \frac{n}{2}\vert\sigma_{s/d}(K)\vert$ holds \cite{murasugi3, tristram}, so we may assume in the following that $\sum_{s=1}^{d-1}\sigma_{s/d}(K) = 0$.

	Recall \cref{Gilmer2}: if $g_4(nK) = g$, then $(H^1(nX_d; \mathbb{F}_p), \overline\beta)$ splits as a direct sum $(G_1 \oplus G_2, \gamma_1 \oplus \gamma_2$), where the dimension of $G_1$ over $\mathbb{F}_p$ is at most $2(d-1)g$, and $G_2$ has a generalized metabolizer $F$ such that for all\break$\chi = (\chi_1, \dots, \chi_n) \in F$,
	\begin{equation*}
		\vert\sigma_1(\tau(nK, \chi))\vert = \bigg\vert \sum_{i=1}^n \sigma_1(\tau(K, \chi_i))\bigg\vert \leq 2dg.
	\end{equation*}
	Our goal is to show that by choosing $n$ appropriately, we can find an element $\chi \in F$ such that this inequality does not hold.
	
	Observe the following. Since $\dim G_1 \leq 2(d-1)g$ and $g$ is fixed, the dimension of $G_1$ is bounded when increasing $n$. On the other hand, increasing $n$ increases the dimension of $G_2$ since $\dim G_1 + \dim G_2 = \dim H^1(nX_d; \mathbb{F}_p)$, and with it the dimension of $F$.
	
	The Gauss-Jordan algorithm shows that if $\dim F = r$, then there is at least one element $\widetilde\chi = (\widetilde\chi_1, \dots, \widetilde\chi_n) \in F$ with $R \geq r$ non-zero entries. We may assume without loss of generality that the first $R$ entries are non-zero, i.e.\
	\begin{equation*}
		\widetilde\chi = (\widetilde\chi_1, \dots, \widetilde\chi_R, \underbrace{0, \dots, 0}_{n-R}).
	\end{equation*}
	From now on, we fix this element $\widetilde\chi$. Note that $k \cdot \widetilde\chi \in F$ for any $k \in \mathbb{N}$ since $F$ is a subspace. We are now going to show that there exists a multiple $k \cdot \widetilde\chi$ whose Casson-Gordon $\tau$-signature can be bounded from below by a value that depends on $n$.
	
	As in \cref{maindef}, let $A_1, \dots, A_m$ be the one-dimensional subspaces of $H^1(X_d; \mathbb{F}_p)$. Write
	\begin{equation*}
		A_j = \{0, \phi_1^j, \dots, \phi_{p-1}^j\} \subseteq H^1(X_d; \mathbb{F}_p)
	\end{equation*}
	for $j = 1, \dots, m$. For every $\phi_i^j$ there is an associated Casson-Gordon $\tau$-signature $s_i^j \coloneqq \sigma_1(\tau(K, \phi_i^j))$. In particular, for every component $\widetilde\chi_h$ for $h = 1, \dots, R$, there is some $j \in \{1, \dots, m\}$ and $i \in \{1, \dots, p-1\}$ such that
	\begin{equation*}
		\widetilde\chi_h = \phi_i^j \in A_j,\quad \sigma_1(\tau(K, \widetilde\chi_h)) = s_i^j.
	\end{equation*}
	Let $a_i^j$ denote the number of components in $\widetilde\chi$ with $\tau$-signature $s_i^j$ and set $r_j \coloneqq \sum_{i=1}^{p-1} a_i^j$. Observe that
	\begin{equation*}
		a_i^j \geq 0,\qquad \sum_{i=1}^{p-1} s_i^j = L_j,\qquad \sum_{j=1}^m r_j = R
	\end{equation*}
	for all $i = 1, \dots, p-1$ and $j = 1, \dots, m$. Recall from \cref{maindef} that we defined $L \coloneqq \min_{j=1,\dots,m} \vert L_j\vert$. We claim the following.
	\newline
	\newline
	\textbf{Claim.} There exists some $1 \leq k \leq p-1$ such that
	\begin{equation*}
		\bigg\vert\sum_{i=1}^n \sigma_1(\tau(K, k \cdot \widetilde\chi_i))\bigg\vert \geq \frac{R}{p-1} \cdot L.
	\end{equation*}
	
	\begin{proof} Consider the elements
	\begin{equation*}
	\begin{split}
		\widetilde\chi & = \phantom{\cdot}(\widetilde\chi_1, \dots, \widetilde\chi_R, 0 \dots, 0) \\
		2\widetilde\chi & = 2\cdot(\widetilde\chi_1, \dots, \widetilde\chi_R, 0, \dots, 0) \\
		& \ \vdots \\
		(p-1)\widetilde\chi & = (p-1)\cdot(\widetilde\chi_1, \dots, \widetilde\chi_R, 0, \dots, 0).
	\end{split}
	\end{equation*}
	For $\ell = 1, \dots, p-1$, let $a_{i, \ell}^j$ denote the number of components in $\ell \cdot \widetilde\chi$ with $\tau$-signature $s_i^j$. Since every $A_j$ is one-dimensional, the numbers $a_{1,\ell}^j, \dots, a_{p-1,\ell}^j$ are just a permutation of $a_1^j, \dots, a_{p-1}^j$ for every $j$. In fact, by looking at the multiples $\ell \cdot \widetilde\chi$ for $\ell = 1, \dots, p-1$, we have that for any given $a_i^j$ and $s_h^j$, there is an $\ell$ such that $a_i^j$ is the number of components in $\ell \cdot \widetilde\chi$ corresponding to the $\tau$-signature $s_h^j$. Therefore,
	\begin{equation*}
		\sum_{\ell=1}^{p-1}\sum_{j=1}^m\sum_{i=1}^{p-1}a_{i,\ell}^js_i^j = \sum_{j=1}^m\sum_{i=1}^{p-1}\bigg(\sum_{\ell = 1}^{p-1}a_{i,\ell}^j\bigg)s_i^j = \sum_{j=1}^m\sum_{i=1}^{p-1}r_js_i^j = \sum_{j=1}^m r_jL_j
	\end{equation*}
	Since also
	\begin{equation*}
		(p-1)\sum_{j=1}^m\sum_{i=1}^{p-1} \frac{r_j}{p-1} s_i^j = \sum_{j=1}^m r_jL_j,
	\end{equation*}
	we have that
	\begin{equation*}
		\sum_{\ell=1}^{p-1}\bigg(\sum_{j=1}^m\sum_{i=1}^{p-1} \Big(a_{i,\ell}^j s_i^j - \frac{r_j}{p-1} s_i\Big)\bigg) = 0.
	\end{equation*}
	Therefore, there have to be some $\ell_1, \ell_2 \in \{1, \dots, p-1\}$ such that
	\begin{equation*}
		\sum_{j=1}^m\sum_{i=1}^{p-1} \Big(a_{i,{\ell_1}}^j s_i^j - \frac{r_j}{p-1} s_i^j\Big) \geq 0 \quad \textrm{and} \quad \sum_{j=1}^m\sum_{i=1}^{p-1} \Big(a_{i,{\ell_2}}^j s_i^j - \frac{r_j}{p-1} s_i^j\Big) \leq 0.
	\end{equation*}
	Choose $k \in \{\ell_1, \ell_2\}$ such that
	\begin{equation*}
		\bigg\vert\sum_{j=1}^m\sum_{i=1}^{p-1} a_{i,k}^j s_i^j\bigg\vert \geq \bigg\vert\sum_{j=1}^m\sum_{i=1}^{p-1} \frac{r_j}{p-1} s_i^j\bigg\vert.
	\end{equation*}
	Unraveling the notations and using that all $L_j$ have the same sign, we find
	\begin{equation*}
	\begin{split}
		\bigg\vert\sum_{i=1}^{n} \sigma_1(\tau(K, k \cdot \widetilde\chi_i))\bigg\vert & = \bigg\vert\sum_{j=1}^m\sum_{i=1}^{p-1} a_{i,k}^j s_i^j\bigg\vert \\ &
		\geq \bigg\vert\sum_{j=1}^m\sum_{i=1}^{p-1} \frac{r_j}{p-1} s_i^j\bigg\vert \\ &
		= \bigg\vert\sum_{j=1}^m \frac{r_j}{p-1}L_j\bigg\vert \\ &
		\geq \bigg(\sum_{j=1}^m \frac{r_j}{p-1}\bigg) \cdot L \\ &
		= \frac{R}{p-1} \cdot L.\myqed
	\end{split}
	\end{equation*}
	\end{proof}
	\noindent The claim shows that there exists an element $k \cdot \widetilde\chi \in F$ such that
	\begin{equation*}
		\vert\sigma_1(\tau(nK, k \cdot \widetilde\chi))\vert \geq \frac{R}{p-1} \cdot L.
	\end{equation*}
	As mentioned earlier, the total number $R$ of non-zero components in $\widetilde\chi$ (or any multiple of it) depends on the dimension of $F$, which in turn depends on $n$, i.e.\ the number of summands in the connected sum $nK$. Since $g$ is fixed, adding more and more knots to the connected sum increases the dimension of $F$. Thus, choose $N \in \mathbb{N}$ such that the connected sum $NK$ admits a generalized metabolizer $F$ with dimension $r \leq R$ satisfying
	\begin{equation*}
		\frac{r}{p-1} \cdot L > 2dg.
	\end{equation*}
	Then for any $n \geq N$, the connected sum $nK$ admits an element $k \cdot \widetilde\chi \in F$ such that
	\begin{equation*}
		\vert\sigma_1(\tau(nK, k \cdot \widetilde\chi))\vert \geq \frac{R}{p-1} \cdot L > 2dg,
	\end{equation*}
	proving that $g_4(nK) > g$ by \cref{Gilmer2}.
\end{proof}

Before we continue, let us recall again \cref{maindef}: if $A_1, \dots, A_m$ are the one-dimensional subspaces of $H^1(X_d; \mathbb{F}_p)$, then we define
\begin{equation*}
	L_j \coloneqq \sum_{\chi \in A_j} \sigma_1(\tau(K, \chi)) \in \mathbb{Q},\quad j = 1, \dots, m.
\end{equation*}

\begin{prop}\label{mainprop}
	Let $K$ be a knot with $d$-fold branched cover $X_d$ where $d$ is a prime-power, and let $p$ be any prime. If the rational numbers $L_1, L_2, \dots, L_m$ have the same sign, and if $\sum_{s=1}^{d-1} \sigma_{s/d}(K) = 0$, where $\sigma_{s/d}(K)$ is the Levine-Tristram signature of $K$ associated to $e^{2\pi is/d}$, then
	\begin{equation*}
		g_4(nK) \geq \frac{nt\cdot L}{4d(p-1)+2(d-1)L}
	\end{equation*}
	for any $n \in \mathbb{N}$, where $t \coloneqq \dim H^1(X_d; \mathbb{F}_p)$ and $L \coloneqq \min_{j=1,\dots,m} \vert L_j\vert$.
\end{prop}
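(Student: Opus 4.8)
The plan is to make the qualitative argument of Proposition \ref{propadd} quantitative by tracking the dimension of the generalized metabolizer $F$ explicitly as a function of $n$ and of $g := g_4(nK)$, and then solving the resulting inequality for $g$. If $L = 0$ the assertion is the trivial bound $g_4(nK) \geq 0$, so I may assume $L > 0$. Exactly as in the proof of Proposition \ref{propadd}, I first reduce to the case $\sum_{s=1}^{d-1}\sigma_{s/d}(K) = 0$: if some Levine--Tristram signature is nonzero, the Murasugi--Tristram bound already forces $g_4(nK)$ to grow linearly in $n$, and it is the vanishing case that is substantive (and the only one occurring for the twist knots). Under this reduction, the inequality $(**)$ of Corollary \ref{Gilmer2}, applied to $nK$, reads $\vert\sigma_1(\tau(nK,\chi))\vert \leq 2dg$ for every $\chi \in F$.

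The key new ingredient is a lower bound on $\dim F$. Since the $d$-fold branched cover of $nK$ is the connected sum of $n$ copies of $X_d$, the first cohomology splits accordingly and $\dim_{\mathbb{F}_p} H^1(nX_d; \mathbb{F}_p) = nt$. Applying Corollary \ref{Gilmer2} to $nK$ with $g = g_4(nK)$ produces a splitting $(G_1 \oplus G_2, \gamma_1 \oplus \gamma_2)$ with $\dim G_1 \leq 2(d-1)g$, hence $\dim G_2 \geq nt - 2(d-1)g$. Because $F$ is a generalized metabolizer of $\gamma_2$, the inequality $2\dim F \geq \dim G_2$ recorded in Section \ref{seclfm} yields $\dim F \geq \tfrac{nt}{2} - (d-1)g$.

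Next I would feed this into the Claim established inside the proof of Proposition \ref{propadd}. That Claim produces an element $k\cdot\widetilde\chi \in F$ whose number $R$ of nonzero entries is at least $\dim F$ and which satisfies $\vert\sigma_1(\tau(nK, k\cdot\widetilde\chi))\vert \geq \tfrac{R}{p-1}\cdot L \geq \tfrac{\dim F}{p-1}\cdot L$, using that all $L_j$ share a common sign. Since $k\cdot\widetilde\chi$ lies in $F$, the upper bound $\vert\sigma_1(\tau(nK, k\cdot\widetilde\chi))\vert \leq 2dg$ also applies, and combining the two gives $\tfrac{\dim F}{p-1}\,L \leq 2dg$, i.e.\ $\dim F \leq \tfrac{2dg(p-1)}{L}$.

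Finally I would combine the two estimates on $\dim F$. From $\tfrac{nt}{2} - (d-1)g \leq \tfrac{2dg(p-1)}{L}$, multiplying through by $L > 0$ and rearranging gives $\tfrac{ntL}{2} \leq g\bigl(2d(p-1) + (d-1)L\bigr)$, whence $g_4(nK) = g \geq \tfrac{ntL}{4d(p-1)+2(d-1)L}$, which is the claimed bound. The engine of the argument---the existence of an element of $F$ whose $\tau$-signature is bounded below by a quantity linear in $\dim F$---is precisely the Claim, which is already available, so the only genuine work is the linear-algebraic bookkeeping that converts the growth of $\dim F$ with $n$ into the final estimate. I expect the one delicate point to be the reduction to $\sum_s \sigma_{s/d}(K) = 0$: as in Proposition \ref{propadd} the complementary case is governed by Murasugi--Tristram, and one should check carefully that this case is compatible with the stated estimate (for the twist knots it does not arise, since their Levine--Tristram signatures vanish).
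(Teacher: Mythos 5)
Your proposal is correct and follows essentially the same route as the paper: apply Corollary \ref{Gilmer2} with $g = g_4(nK)$, bound $\dim F \geq (nt-2(d-1)g)/2$, invoke the Claim from the proof of Proposition \ref{propadd} to produce $\chi \in F$ with $\vert\sigma_1(\tau(nK,\chi))\vert \geq \frac{\dim F}{p-1}L$, and solve the resulting inequality for $g$. The paper merely phrases the final step contrapositively (if $g$ is below the threshold then $g_4(nK) \neq g$) rather than as your direct inequality chain, which is a cosmetic difference.
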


\begin{proof}
	We wish to determine the maximal number $g \in \mathbb{N}$ in terms of $n$ for which the proof of \cref{propadd} applies. So suppose that $H^1(nX_d; \mathbb{F}_p)$ splits as in \cref{Gilmer2} for some $g \in \mathbb{N}$, and let $r = \dim F$. If $L = 0$, then $nt\cdot L/(4d(p-1)+2(d-1)L) = 0$ and we obtain the trivial bound $g_4(nK) \geq 0$. So suppose that $L \neq 0$. The proof of \cref{propadd} showed that if
	\begin{equation*}
		\frac{r}{p-1}\cdot L > 2dg,
	\end{equation*}
	then $g_4(nK) \neq g$. Since $\dim G_2 \geq nt-2(d-1)g$, we know that $r \geq (nt-2(d-1)g)/2$, so
	\begin{equation*}
		\frac{r}{p-1}\cdot L \geq \frac{nt-2(d-1)g}{2(p-1)}\cdot L.
	\end{equation*}
	The right-hand side of the last inequality is strictly greater than $2dg$ if and only if
	\begin{equation*}
		g < \frac{nt\cdot L}{4d(p-1)+2(d-1)L}.
	\end{equation*}
	Thus, if
	\begin{equation*}
		g \leq \bigg\lceil\frac{nt\cdot L}{4d(p-1)+2(d-1)L}\bigg\rceil - 1,
	\end{equation*}
	then $g_4(nK) \neq g$ by applying the argument in the proof of \cref{propadd}. It follows that
	\begin{equation*}
		g_4(nK) \geq \frac{nt\cdot L}{4d(p-1)+2(d-1)L}
	\end{equation*}
	as claimed.
\end{proof}

\begin{thm}[Main Theorem]\label{mainthm}
	Let $K$ be a knot with $d$-fold branched cover $X_d$ where $d$ is a prime-power, and let $p$ be any prime. If the rational numbers $L_1, L_2, \dots, L_m$ have the same sign, and if $\sum_{s=1}^{d-1} \sigma_{s/d}(K) = 0$, where $\sigma_{s/d}(K)$ is the Levine-Tristram signature of $K$ associated to $e^{2\pi is/d}$, then
	\begin{equation*}
		g_{st}(K) \geq \frac{t\cdot L}{4d(p-1)+2(d-1)L},
	\end{equation*}
	where $t \coloneqq \dim H^1(X_d; \mathbb{F}_p)$ and $L \coloneqq \min_{j=1,\dots,m} \vert L_j\vert$.
\end{thm}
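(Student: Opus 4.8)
The plan is to deduce the theorem directly from Proposition~\ref{mainprop} by dividing through by $n$ and passing to the limit; indeed, the excerpt already announces that the main theorem is ``an immediate consequence'' of that proposition, so all the real content has been packaged into the per-summand estimate. Recall that the stable $4$-genus is defined as $g_{st}(K) = \lim_{n\to\infty} g_4(nK)/n$, which is precisely the quantity that the proposition controls once normalized by $n$.

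First I would invoke Proposition~\ref{mainprop}, whose hypotheses are identical to those of the theorem: assuming $L_1,\dots,L_m$ all share the same sign, we have
\begin{equation*}
    g_4(nK) \geq \frac{nt\cdot L}{4d(p-1)+2(d-1)L}
\end{equation*}
for every $n\in\mathbb{N}$. Dividing both sides by $n$ gives
\begin{equation*}
    \frac{g_4(nK)}{n} \geq \frac{t\cdot L}{4d(p-1)+2(d-1)L},
\end{equation*}
where the right-hand side is now a constant independent of $n$.

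Next I would take the limit $n\to\infty$. The limit defining $g_{st}(K)$ exists because the $4$-genus is subadditive under connected sum, i.e.\ $g_4((n+m)K)\leq g_4(nK)+g_4(mK)$, so Fekete's subadditivity lemma guarantees that $\lim_{n\to\infty} g_4(nK)/n$ exists and in fact equals $\inf_n g_4(nK)/n$; this is exactly how Livingston \cite{Livingston} sets up the invariant. Since the lower bound is constant in $n$, passing to the limit preserves it, yielding
\begin{equation*}
    g_{st}(K) = \lim_{n\to\infty}\frac{g_4(nK)}{n} \geq \frac{t\cdot L}{4d(p-1)+2(d-1)L},
\end{equation*}
as claimed.

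There is essentially no obstacle at this stage: all the difficulty resides in Proposition~\ref{mainprop} (and ultimately in Corollary~\ref{Gilmer2} and the counting argument in Proposition~\ref{propadd}), and the only point deserving a word of justification is the existence of the limit, which is standard. I would also remark in passing that if $L=0$ the statement collapses to the trivial bound $g_{st}(K)\geq 0$, so the content-bearing case is $L\neq 0$, which the proposition already covers.
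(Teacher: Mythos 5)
Your proposal is correct and follows exactly the route the paper takes: invoke Proposition~\ref{mainprop}, divide by $n$, and pass to the limit defining $g_{st}$. The only addition is your (accurate) aside on Fekete's lemma guaranteeing the existence of the limit, which the paper leaves implicit.
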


\begin{proof}
	By \cref{mainprop},
	\begin{equation*}
		g_4(nK) \geq \frac{nt\cdot L}{4d(p-1)+2(d-1)L}
	\end{equation*}
	for any $n \in \mathbb{N}$. Therefore
	\begin{equation*}
	\begin{split}
		g_{st}(K) & = \lim_{n\rightarrow\infty} \frac{g_4(nK)}{n} \\ &
		\geq \lim_{n\rightarrow\infty} \frac{1}{n}\cdot\frac{nt\cdot L}{4d(p-1)+2(d-1)L} \\ &
		= \frac{t\cdot L}{4d(p-1)+2(d-1)L}.\myqed
	\end{split} 
	\end{equation*}
\end{proof}

\section{Example: Twist Knots}\label{secex}

The results from \cref{secmain} are in particular applicable for (classical) genus one knots, for example the twist knots. This is because for genus one knots, there is an explicit formula, due to Gilmer, for computing the Casson-Gordon $\tau$-signature. We proceed by describing this formula in \cref{secexsig}, following the original source \cite{gilmer1}. In the subsequent \cref{secextwistsignature,secextwist}, we use this formula and the results from \cref{secmain} to obtain the lower bound for the stable $4$-genus of twist knots. In \cref{secexupperbound}, we apply a different technique recently used by Baader and Lewark \cite{bls4g} to obtain an upper bound for the stable $4$-genus of twist knots, yielding the subfamily with $g_{st}$ close to but not greater than $1/2$.

\subsection{Gilmer's formula for Casson-Gordon $\tau$-signatures}\label{secexsig}

The results in the following section are due to Gilmer \cite{gilmer1}. From now on, we will work exclusively with the double branched cover $X_2$ (see the remark at the end of \cref{secexsig}). Let $K$ be a knot with Seifert surface $F$,  Seifert pairing $\theta\colon H_1(F; \mathbb{Z}) \times H_1(F; \mathbb{Z}) \rightarrow \mathbb{Z}$, and double branched cover $X_2$. Define
\begin{equation*}
	\varepsilon\colon H_1(F; \mathbb{Z}) \rightarrow H^1(F; \mathbb{Z}),\quad x \mapsto \varepsilon_x(\cdot) = \theta(x, \cdot) + \theta(\cdot, x).
\end{equation*}
There is an isomorphism
\begin{equation*}
	H^1(X_2; \mathbb{Q}/\mathbb{Z}) \cong \ker (\varepsilon \otimes \textrm{id}_{\mathbb{Q}/\mathbb{Z}}),
\end{equation*}
which is natural up to sign. Given $\chi \in H^1(X_2; \mathbb{Q}/\mathbb{Z})$, this allows for the identification $\chi = x \otimes s/m$ for some $x \in H_1(F; \mathbb{Z})$ and $0 \leq s < m$. Given $\omega = e^{2\pi is/m} \in S^1$, let $\sigma_{s/m}(K)$ denote the Levine-Tristram signature of $K$ associated to $\omega$.

Suppose now that $g(F) = 1$, i.e.\ $K$ is of genus one. If $x \in H_1(F; \mathbb{Z})$ is primitive, let $J_x$ be the knot in $S^3$ obtained by representing $x$ by a simple closed curve $\gamma$ on $F$ and then viewing $\gamma$ as a knot in $S^3$. Note that $J_x$ is unique up to isotopy since $g(F) = 1$. We have the following theorem.

\begin{thm}[Gilmer \cite{gilmer1}]\label{thmex}
	Let $K$ be a genus one knot with genus-minimal Seifert surface $F$ and ordinary signature $\sigma(K)$. If $\chi = x\, \otimes\, s/m \in H_1(F; \mathbb{Q}/\mathbb{Z})$, where $0 < s < m$, $m$ is a prime-power and $x$ is primitive, then
	\begin{equation*}
		\sigma_1(\tau(K, \chi)) = 2\sigma_{s/m}(J_x) + \frac{4(m-s)s}{m^2}\theta(x,x) + \sigma(K).
	\end{equation*}
\end{thm}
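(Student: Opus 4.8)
The plan is to derive Gilmer's formula by identifying the Casson--Gordon $\tau$-signature with a signature defect computed on an explicit $4$-manifold bounding the relevant cover, and then to evaluate that defect by a careful bookkeeping of the three contributions on the right-hand side. Since $K$ has genus one, the Seifert surface $F$ is a once-punctured torus, and $H_1(F;\mathbb{Z}) \cong \mathbb{Z}^2$. The character $\chi = x \otimes \frac{s}{m}$ is supported on a single primitive class $x$, which we may complete to a symplectic basis $\{x,y\}$ of $H_1(F;\mathbb{Z})$. The curve $\gamma$ representing $x$ gives the companion knot $J_x$, and the term $2\sigma_{\frac{s}{m}}(J_x)$ should arise as the contribution of this companion to the signature of a branched cover. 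The starting point is the definition of $\tau(K,\chi)$ as a signature defect: one builds the $m$-fold cyclic cover of $X_2$ induced by $\chi$ (equivalently, works in a $4$-manifold $W$ with $\partial W = X_2$ over which $\chi$ extends), and computes the twisted signature $\sigma_1$ of the associated intersection form minus the ordinary signature.

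First I would set up the geometry explicitly. Because $g(F)=1$ and $\chi$ is concentrated on $x$, the relevant $4$-manifold can be taken to be a mapping cylinder or a surgery-theoretic trace in which the only nontrivial twisting occurs along the single curve $\gamma = \gamma_x$. The key geometric fact to exploit is that the double branched cover $X_2$ is obtained by $0$-surgery-type constructions on $\gamma_x$, so the twisted homology splits according to the basis $\{x,y\}$. I expect the term $\frac{4(m-s)s}{m^2}\theta(x,x)$ to be exactly the self-linking (framing) correction coming from the $\theta(x,x)$ entry of the Seifert form, weighted by the character value $s/m$; here the coefficient $\frac{4(m-s)s}{m^2}$ is the standard Tristram--Levine weight $(1-\omega)(1-\bar\omega) = |1-e^{2\pi i s/m}|^2 = \frac{4(m-s)s}{m^2}$ rewritten rationally, multiplied by the framing $\theta(x,x)$.

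Next I would assemble the three pieces. The companion signature $2\sigma_{\frac{s}{m}}(J_x)$ comes from the satellite/companion structure: a neighborhood of $\gamma_x$ in the cover contributes a Seifert-surface signature of $J_x$ evaluated at $\omega = e^{2\pi i s/m}$, and the factor $2$ reflects that the double branched cover doubles this contribution. The term $\sigma(K)$ is the ambient ordinary signature of $K$, which appears as the base (untwisted) signature of the construction, i.e.\ the signature of the $4$-manifold before the twisting by $\chi$ is imposed. Putting these together,
\begin{equation*}
	\sigma_1(\tau(K,\chi)) = \underbrace{2\sigma_{\frac{s}{m}}(J_x)}_{\text{companion}} + \underbrace{\frac{4(m-s)s}{m^2}\theta(x,x)}_{\text{framing correction}} + \underbrace{\sigma(K)}_{\text{base}}.
\end{equation*}
Each term must be verified to be well-defined independently of the choices (the symplectic completion $y$, the representing curve $\gamma$), using that $J_x$ is unique up to isotopy when $g(F)=1$ and that the character lives in $\ker(\varepsilon \otimes \mathrm{id})$, which constrains $\chi$ compatibly with the Seifert pairing.

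The main obstacle I anticipate is the precise identification of the companion contribution with $2\sigma_{\frac{s}{m}}(J_x)$ and the careful tracking of the twisting coefficient through the branched-cover construction: one must show that after passing to $X_2$ and then to the $\chi$-induced cover, the only surviving twisted homology is that of a neighborhood of $\gamma_x$, so that the computation reduces to a genus-one Seifert-form calculation with the Tristram--Levine weight. Controlling this reduction -- ensuring no cross-terms from the symplectic partner $y$ contribute and that the rationalized Witt-group element $\tau(K,\chi)$ has the claimed $\sigma_1$-signature -- is where the hypothesis that $m$ is a prime-power (guaranteeing $\tau(K,\chi)$ is defined and the cover is well-behaved) and the primitivity of $x$ are essential. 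Since this is precisely the content of Gilmer's original computation in \cite{Gilmer3}, I would follow that argument, verifying each of the three contributions separately and then summing.
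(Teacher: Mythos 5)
The paper does not prove Theorem \ref{thmex}: it is quoted from Gilmer's article \cite{Gilmer3} and used as a black box, so there is no internal proof to compare your attempt against. Judged on its own terms, your proposal is a reasonable high-level outline of the shape of Gilmer's argument (a signature-defect computation split into a companion term, a framing correction, and a base term), but it is not a proof. All three identifications are asserted rather than established --- the text repeatedly says ``I expect'' and ``I anticipate'' --- and the final paragraph defers the substantive work back to \cite{Gilmer3}. No $4$-manifold is actually constructed, no additivity (Novikov or Witt-group) is invoked, and the satellite formula needed to extract $2\sigma_{\frac{s}{m}}(J_x)$ (Litherland's formula, together with the fact that the curve $\gamma$ representing $x$ has two lifts in the double branched cover, which is where the factor $2$ really comes from) is never stated.

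There is also one concrete error. You identify the coefficient $\frac{4(m-s)s}{m^2}$ with $(1-\omega)(1-\bar\omega)=\lvert 1-e^{2\pi i s/m}\rvert^2$ ``rewritten rationally''. These are not equal: $\lvert 1-e^{2\pi i s/m}\rvert^2 = 2-2\cos(2\pi s/m)$, which at $s/m=\frac{1}{2}$ equals $4$, whereas $\frac{4(m-s)s}{m^2}$ equals $1$ there. The quadratic weight $4\,\frac{s}{m}\bigl(1-\frac{s}{m}\bigr)$ is not the Tristram--Levine weight; in Gilmer's computation it arises as the twisted-signature defect of the $2$-handle attached along $\gamma$ with framing $\theta(x,x)$ (equivalently, from the Casson--Gordon invariant of the resulting lens-space piece). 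Misattributing its origin matters, because the whole proof consists of correctly sourcing each of the three terms. As written, the proposal would not become a proof without importing essentially all of \cite{Gilmer3}.
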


Using \cref{thmex}, the computation of Casson-Gordon $\tau$-signatures for genus one knots boils down to the computation of generators of $\ker (\varepsilon \otimes \textrm{id}_{\mathbb{Q}/\mathbb{Z}})$, and then identifying the corresponding knots $J_x$ and their Levine-Tristram signature.\footnote{There is also a result about Casson-Gordon $\tau$-signatures for knots with higher genus; see \cite[Theorem 3.4]{gilmer1}. However, this result yields in general only an inequality for $\tau$-signatures.}

\begin{rem}
	Gilmer's formula for the Casson-Gordon $\tau$-signatures of genus one knots in \cite{gilmer1} is stated in terms of the double branched cover $X_2$. It is worth to note that generalizations of this formula to higher branched covers exist \cite{gilmer3, kimcg, naikcg}. However, since the double branched cover is the most accessible and since our formula for the stable $4$-genus yields a priori the best result for $d = 2$, we continue our computations for the twist knots with the double branched cover $X_2$.
\end{rem}

\subsection{Casson-Gordon $\tau$-signatures of twist knots}\label{secextwistsignature}

The main actors in the remaining sections are the twist knots. Given $n \in \mathbb{N}$, we will denote by $K_n$ the twist knot with $n$ full right hand twists, as depicted in \cref{fig:twist_knots_with_brace}.

\begin{figure}[H]
	\centering
	\captionsetup{font=small}
	\def\svgscale{0.24}
	
	\caption{The twist knot $K_n$}
	\label{fig:twist_knots_with_brace}
\end{figure}

The double branched cover of $K_n$ is the lens space $L(4n+1, 2)$, with $\mathbb{Q}/\mathbb{Z}$-(co-)homology
\begin{equation*}
	H_1(X_2; \mathbb{Q}/\mathbb{Z}) \cong H^1(X_2; \mathbb{Q}/\mathbb{Z}) \cong \mathbb{Z}_{4n+1}.
\end{equation*}
Given $\chi \in H^1(X_2; \mathbb{Q}/\mathbb{Z})$ of prime-power order, there is the Casson-Gordon invariant $\tau(K_n, \chi)$ and the Casson-Gordon $\tau$-signature $\sigma_1(\tau(K_n, \chi))$. Since $g(K_n) = 1$ for all $n \in \mathbb{N}$, we can use Gilmer's formula (see \cref{thmex}) to compute the $\tau$-signatures of the twist knots. We would like to note at this point that the following computations have already appeared in the literature previously in greater generality \cite{kimcg}. However, for the sake of completeness and because we use slightly different conventions, we chose to perform the computations once more.

Let $F_n$ be the genus-one Seifert surface for $K_n$ with $a$ and $b$ as a basis for the first homology as shown in \cref{fig:seifert_surface_twist_knots_homology_basis}.

\begin{figure}[h]
	\centering
	\captionsetup{font=small}
	\def\svgscale{0.16}
\begingroup%
  \makeatletter%
  \providecommand\color[2][]{%
    \errmessage{(Inkscape) Color is used for the text in Inkscape, but the package 'color.sty' is not loaded}%
    \renewcommand\color[2][]{}%
  }%
  \providecommand\transparent[1]{%
    \errmessage{(Inkscape) Transparency is used (non-zero) for the text in Inkscape, but the package 'transparent.sty' is not loaded}%
    \renewcommand\transparent[1]{}%
  }%
  \providecommand\rotatebox[2]{#2}%
  \newcommand*\fsize{\dimexpr\f@size pt\relax}%
  \newcommand*\lineheight[1]{\fontsize{\fsize}{#1\fsize}\selectfont}%
  \ifx\svgwidth\undefined%
    \setlength{\unitlength}{1408.90777588bp}%
    \ifx\svgscale\undefined%
      \relax%
    \else%
      \setlength{\unitlength}{\unitlength * \real{\svgscale}}%
    \fi%
  \else%
    \setlength{\unitlength}{\svgwidth}%
  \fi%
  \global\let\svgwidth\undefined%
  \global\let\svgscale\undefined%
  \makeatother%
  \begin{picture}(1,0.46106837)%
    \lineheight{1}%
    \setlength\tabcolsep{0pt}%
    \put(0,0){\includegraphics[width=\unitlength,page=1]{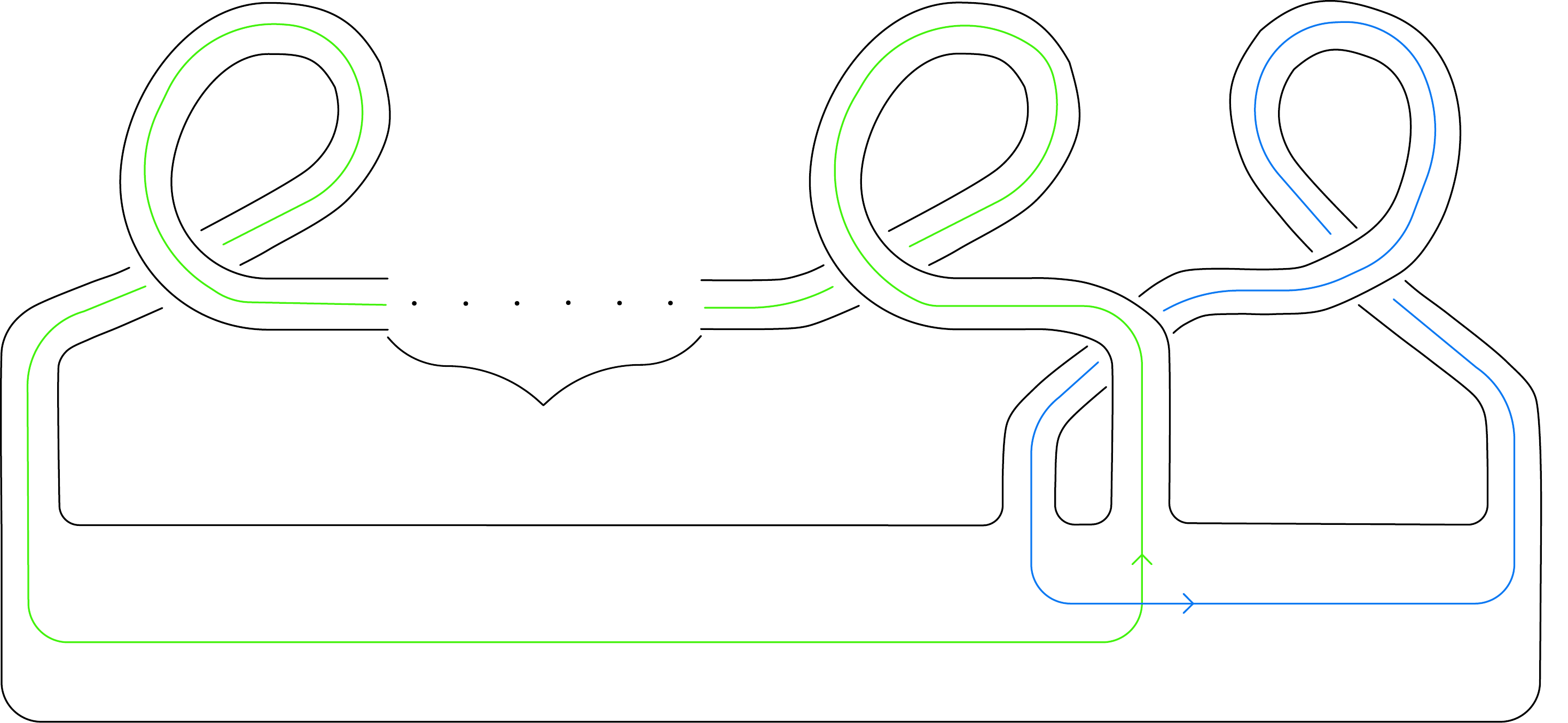}}%
    \put(0.27414834,0.16882058){\color[rgb]{0,0,0}\makebox(0,0)[lt]{\lineheight{1.25}\smash{\begin{tabular}[t]{l}$n$ curls\end{tabular}}}}%
    \put(0.9337018,0.43232257){\color[rgb]{0,0,0}\makebox(0,0)[lt]{\lineheight{1.25}\smash{\begin{tabular}[t]{l}$F_n$\end{tabular}}}}%
    \put(0.76974501,0.04691764){\color[rgb]{0,0,0}\makebox(0,0)[lt]{\lineheight{1.25}\smash{\begin{tabular}[t]{l}$\textcolor[HTML]{0D78F2}{a}$\end{tabular}}}}%
    \put(0.7441933,0.08737451){\color[rgb]{0,0,0}\makebox(0,0)[lt]{\lineheight{1.25}\smash{\begin{tabular}[t]{l}$\textcolor[HTML]{42F20D}{b}$\end{tabular}}}}%
  \end{picture}%
\endgroup%

	\caption{The Seifert surface $F_n$ for $K_n$ with a basis for $H_1(F_n; \mathbb{Z})$}
	\label{fig:seifert_surface_twist_knots_homology_basis}
\end{figure}

In this setting, the Seifert matrix of $K_n$ takes the form
\begin{equation*}
	A_n = \left(\begin{matrix}
		-1 & 1 \\
		0 & n
	\end{matrix}\right)
\end{equation*}
We are interested in finding generators for $\ker (\varepsilon \otimes \textrm{id}_{\mathbb{Q}/\mathbb{Z}}) \cong H^1(X_2; \mathbb{Q}/\mathbb{Z})$. Since $H^1(X_2; \mathbb{Q}/\mathbb{Z}) \cong \mathbb{Z}_{4n+1}$ is a finite cyclic group of order $4n+1$, any element of order $4n+1$ in this kernel will form a generating set. Let $x = (1, 2)^\top \in H_1(F_n; \mathbb{Z})$. We claim that the element
\begin{equation*}
	x \otimes \frac{1}{4n+1} = \left(\begin{matrix}
 1 \\ 2	
 \end{matrix}\right) \otimes \frac{1}{4n+1} \in H_1(F; \mathbb{Z}) \otimes \mathbb{Q}/\mathbb{Z}
\end{equation*}
is of order $4n+1$ and contained in $\ker (\varepsilon \otimes \textrm{id}_{\mathbb{Q}/\mathbb{Z}})$. The former assertion is clear. To see the latter, note that given any $y = (y_1, y_2)^\top \in H_1(F_n; \mathbb{Z})$,
\begin{equation*}
	\varepsilon_x(y) = x^\top A_n y + y^\top A_n x = (4n+1)y_2,
\end{equation*}
showing that $(\varepsilon \otimes \textrm{id}_{\mathbb{Q}/\mathbb{Z}})(x \otimes 1/(4n+1))$ is zero in $H^1(F_n; \mathbb{Z}) \otimes \mathbb{Q}/\mathbb{Z}$.

The next step consists of representing $x = (1, 2)^\top$ as a simple closed curve on $F_n$ and determining what knot $J_x$ this curve represents in $S^3$. \cref{fig:torus_knot} shows this process. It turns out that the element $x$ represents a $(2, 2n+1)$-torus knot in $S^3$, that is $J_x = T(2, 2n+1)$.

\begin{figure}[H]
	\centering
	\captionsetup{font=small}
	\fontsize{8}{10}\selectfont
	\def\svgscale{0.143}
	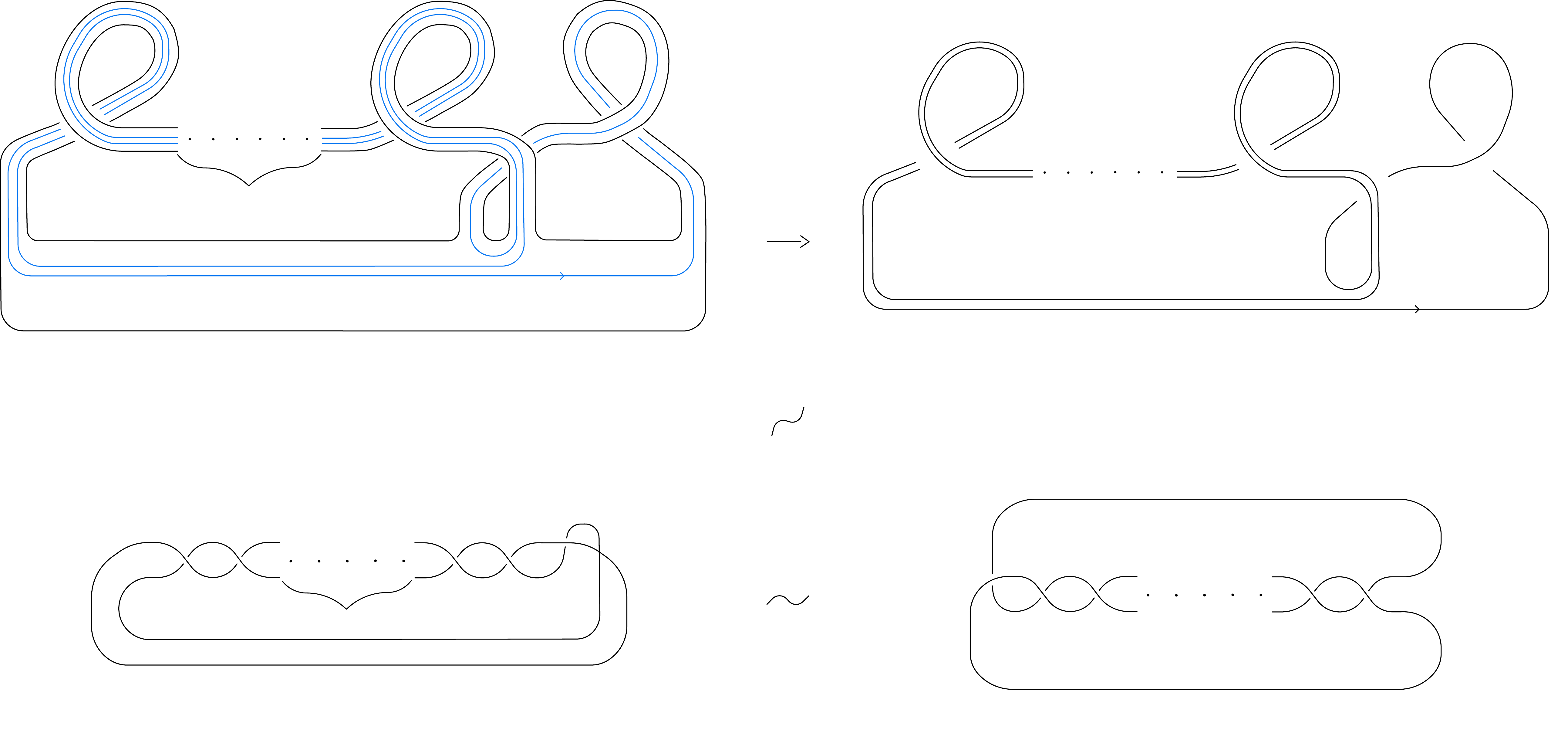
	\caption{The element $x = (1, 2)^\top \in H_1(F_n; \mathbb{Z})$ represents a $(2, 2n+1)$-torus knot in $S^3$}
	\label{fig:torus_knot}
\end{figure}

We now have all of the ingredients to compute the Casson-Gordon $\tau$-signatures from \cref{thmex}. Assume for the moment that $4n+1$ is a power of a prime, so that every element in $H^1(X_2; \mathbb{Q}/\mathbb{Z})$ has prime-power order. Given
\begin{equation*}
	\chi = x \otimes \frac{s}{4n+1} \in H^1(X_2; \mathbb{Q}/\mathbb{Z}),\quad 0 < s < 4n+1,
\end{equation*}
we get
\begin{align*}
	\sigma_1(\tau(K_n, \chi)) & = \sigma_{s/(4n+1)}(T(2, 2n+1)) + \frac{4((4n+1)-s)s}{(4n+1)^2}\theta(x,x) + \sigma(K_n) \\ &
	= \sigma_{s/(4n+1)}(T(2, 2n+1)) + \frac{4((4n+1)-s)s}{4n+1}
\end{align*}
since $\theta(x,x) = 4n+1$ and $\sigma(K_n) = 0$ for all $n \in \mathbb{N}$. The Levine-Tristram signatures of $(2, 2n+1)$-torus knots are well-known and readily computed (see for instance \cite{litherland}). Thus overall,
\begin{equation}\label{eq:tkcgtausign}
	\sigma_1(\tau(K_n, \chi)) = \begin{cases}
		-4\left\lceil \frac{s}{2} \right\rceil + \frac{4((4n+1)-s)s}{4n+1},& s = 1, \dots, 2n \\
		-4\left\lceil \frac{4n+1-s}{2} \right\rceil + \frac{4((4n+1)-s)s}{4n+1},& s = 2n+1, \dots, 4n.
	\end{cases}
\end{equation}
\cref{tab:table3} shows the Casson-Gordon $\tau$-signatures for the twist knot $K_6$. Notice the symmetry of the values about $12$.

\begin{table}[H]
	\centering
	\captionsetup{width=.8\linewidth, font=small}
	\begin{tabular}{cc}
	Element & Signature \\ 
	\hline
	$0$ & $0$ \\
	$1$ & $-0.16$ \\
	$2$ & $3.36$ \\
	$3$ & $2.56$ \\
	$4$ & $5.44$ \\
	$5$ & $4$ \\
	$6$ & $6.24$ \\
	$7$ & $4.16$ \\
	$8$ & $5.76$ \\
	$9$ & $3.04$ \\
	$10$ & $4$ \\
	$11$ & $0.64$ \\
	$12$ & $0.96$
	\end{tabular}%
	\quad
	\begin{tabular}{cc}
	Element & Signature \\ 
	\hline
	$13$ & $0.96$ \\
	$14$ & $0.64$ \\
	$15$ & $4$ \\
	$16$ & $3.04$ \\
	$17$ & $5.76$ \\
	$18$ & $4.16$ \\
	$19$ & $6.24$ \\
	$20$ & $4$ \\
	$21$ & $5.44$ \\
	$22$ & $2.56$ \\
	$23$ & $3.36$ \\
	$24$ & $-0.16$ \\
	\null
	\end{tabular}
	\caption{Elements in $H^1(X_2; \mathbb{Q}/\mathbb{Z}) \cong \mathbb{Z}_{25}$ and their corresponding Casson-Gordon $\tau$-signatures for the twist knot $K_6$.}
	\label{tab:table3}
\end{table}

Suppose now that $4n+1$ is a general number, not necessarily a prime-power. It is still true that the right-hand side of the equation for the Casson-Gordon $\tau$-signature given by \cref{thmex} is equal to the right-hand side of \cref{eq:tkcgtausign} for twist knots. However, the whole equation \cref{eq:tkcgtausign} only holds if the corresponding character $\chi$ has prime-power order. So the best way to get information about the Casson-Gordon $\tau$-signature of an arbitrary twist knot is to study the right-hand side of \cref{eq:tkcgtausign}, and keep in mind that it is equal to $\sigma_1(\tau(K_n, \chi))$ if and only if the corresponding character has prime-power order.

In general, the values of the two formulas for $\sigma_1(\tau(K_n, \chi))$ in \cref{eq:tkcgtausign} are symmetric about $2n$. Thus, it suffices to consider only the first formula and take symmetry into account. We will do so in the following section.

\subsection{Lower bound for the stable $4$-genus of twist knots}\label{secextwist}

We proceed by computing the lower bound for $g_{st}(K_n)$ from our results in \cref{secmain}. Let $p$ be a prime dividing $4n+1$. Since $H^1(X_2; \mathbb{Q}/\mathbb{Z})$ is finite cyclic, $H^1(X_2; \mathbb{F}_p)$ is one-dimensional; hence there is only one number
\begin{equation*}
	L_1 = \sum_{\chi \in H^1(X_2; \mathbb{F}_p)} \sigma_1(\tau(K, \chi)).
\end{equation*}
In particular, $\vert L_1\vert = L$. Observe that if $p$ appears with exponent $k$ in the prime-decomposition of $4n+1$, then the elements of $H^1(X_2; \mathbb{F}_p)$, considered as elements in $\mathbb{Z}_{4n+1}$, are $0, q, 2q, \dots, (p-1)q$ with $q = (4n+1)/p$. Then, using the symmetry of the Casson-Gordon $\tau$-signatures for the twist knots,
\begin{equation}\label{eq:tkl1}
\begin{aligned}[b]
	L_1 & = 2\sum_{s=1}^{(p-1)/2} -4\left\lceil\frac{sq}{2}\right\rceil + \frac{4((4n+1)-sq)sq}{4n+1} \\ &
	= q(p-1)(p+1) - \bigg(\frac{pq^2(p-1)(p+1)}{3(4n+1)}\bigg) - \sum_{s=1}^{(p-1)/2} 8\left\lceil\frac{sq}{2}\right\rceil \\ &
	= \frac{2}{3}q(p^2-1) - \sum_{s=1}^{(p-1)/2} 8\left\lceil\frac{sq}{2}\right\rceil.
\end{aligned}
\end{equation}
To evaluate the remaining sum, we distinguish two cases. If $(p-1)/2$ is even, then
\begin{equation*}
\begin{split}
	\sum_{s=1}^{(p-1)/2} 8\left\lceil\frac{sq}{2}\right\rceil & = \sum_{i=1}^{(p-1)/4} 8\bigg(iq + \frac{(2i-1)q+1}{2}\bigg) \\ &
	= (1-q)(p-1) + \frac{q(p-1)(p+3)}{2} \\ &
	= \frac{1}{2}(p-1)(pq+q+2).
\end{split}
\end{equation*}
On the other hand, if $(p-1)/2$ is odd, then
\begin{equation*}
\begin{split}
	\sum_{s=1}^{(p-1)/2} 8\left\lceil\frac{sq}{2}\right\rceil & = \bigg(\sum_{i=1}^{(p-3)/4} 8\Big(iq + \frac{(2i-1)q+1}{2}\Big)\bigg) + 4q\bigg(\frac{p-1}{2}+1\bigg) \\ &
	= \frac{q(p-3)(p+1)}{2}+(p-3)(1-q)+2q(p-1)+8 \\ &
	= \frac{1}{2}(p^2q+2p-q+2).
\end{split}
\end{equation*}
Putting this into the equation \cref{eq:tkl1} above and simplifying further, we overall obtain
\begin{equation*}
	L_1 = \begin{cases}
		\frac{1}{6}(p-1)(pq+q-6), & (p-1)/2 \textrm{ even } \\
		\frac{1}{6}(p^2q-6p-q-6), & (p-1)/2 \textrm{ odd }.
	\end{cases}
\end{equation*}
Note that $p = 2$ cannot occur since $4n+1$ is always odd, so we covered all cases above. It is not hard to see that $L_1 \geq 0$ for all primes $p \geq 3$. Moreover, all Levine-Tristram signatures of twist knots vanish, hence we can apply \cref{mainthm} and obtain the following.

\begin{cor}\label{corexmain}
	Let $K_n$ be the twist knot with $n \in \mathbb{N}\backslash\{0,2\}$ full right hand twists and $p$ a prime dividing $4n+1$. Then
	\begin{equation*}
		g_{st}(K_n) \geq \begin{cases}
		\frac{(pq+q-6)}{2(pq+q+18)}, & (p-1)/2 \textrm{ even } \\
		\frac{p^2q-6p-q-6}{2(p^2q+18p-q-30)}, & (p-1)/2 \textrm{ odd },
	\end{cases}
	\end{equation*}
	where $q = (4n+1)/p$.
\end{cor}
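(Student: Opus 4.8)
The plan is to specialize Theorem \ref{mainthm} to the double branched cover (so $d = 2$) of the twist knot $K_n$ and then insert the value of $L$ computed in the display preceding the corollary.

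First I would record the two structural inputs needed to evaluate the bound. Since the double branched cover of $K_n$ is the lens space $L(4n+1,2)$ with $H_1(X_2;\mathbb{Z}) \cong \mathbb{Z}_{4n+1}$, universal coefficients give $H^1(X_2;\mathbb{F}_p) \cong \mathrm{Hom}(\mathbb{Z}_{4n+1},\mathbb{F}_p) \cong \mathbb{F}_p$ whenever $p \mid 4n+1$. Hence $t = \dim H^1(X_2;\mathbb{F}_p) = 1$, and $H^1(X_2;\mathbb{F}_p)$ possesses a single one-dimensional subspace, namely itself, so that $m = 1$. In particular the hypothesis of Theorem \ref{mainthm} that $L_1,\dots,L_m$ all share a sign is vacuous, and $L = \vert L_1\vert$ equals the quantity computed in $(\ddagger)$ and the case analysis following it; combined with the observation there that this quantity is $\geq 0$ for every prime $p \geq 3$, Theorem \ref{mainthm} applies.

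Next I would substitute $d = 2$ and $t = 1$ into the bound of Theorem \ref{mainthm}, collapsing it to
\[
	g_{st}(K_n) \geq \frac{L}{8(p-1) + 2L}.
\]
It then remains to insert the two case-expressions for $L$ and simplify. In the case $(p-1)/2$ even, writing $L = \tfrac{1}{6}(p-1)(pq+q-6)$, I would factor $(p-1)$ out of both numerator and denominator, clear the factor $\tfrac{1}{6}$ by multiplying through by $6$, and recognize $48 + 2(pq+q-6) = 2(pq+q+18)$ in the denominator. In the case $(p-1)/2$ odd, inserting $L = \tfrac{1}{6}(p^2q - 6p - q - 6)$ and again clearing $\tfrac{1}{6}$, the denominator becomes $48(p-1) + 2(p^2q - 6p - q - 6)$, which expands to $2(p^2q + 18p - q - 30)$.

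There is no genuine conceptual obstacle here: the corollary is a direct substitution into the Main Theorem once $t$ and $L$ are in hand. The only point demanding care is the algebraic bookkeeping in the odd case, where---unlike the even case---no common factor of $(p-1)$ cancels, so one must expand $48(p-1)$ and combine it correctly with $2L$ to identify the denominator $2(p^2q + 18p - q - 30)$.
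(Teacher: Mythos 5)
Your proposal is correct and follows exactly the route the paper takes: the paper also observes that $H^1(X_2;\mathbb{F}_p)$ is one-dimensional (so $t=1$, $m=1$, the sign hypothesis is automatic, and $L=\vert L_1\vert\geq 0$), and then treats the corollary as an immediate substitution of $d=2$ and the computed $L$ into Theorem \ref{mainthm}. Your algebraic simplifications in both parity cases check out and reproduce the stated denominators $2(pq+q+18)$ and $2(p^2q+18p-q-30)$.
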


\begin{rem}
	One can check from the previous computations that $K_0$, $K_1$ and $K_2$ are the only twist knots with $L = 0$. This means that a non-trivial lower bound for $g_{st}$ can be obtained from \cref{corexmain} (resp.\ \cref{mainthm}) for any twist knot $K_n$ with $n \geq 3$. The unknot $K_0$ and the Stevedore knot $K_2$ are slice, and the figure-eight $K_1$ represents torsion in the knot concordance group $\mathcal{C}$. Thus we obtain the following corollaries.
\end{rem}
	
\begin{cor}\label{corextorsion}
	Let $K_n$ be any twist knot. Then
	\begin{equation*}
		K_n \textrm{ is torsion in $\mathcal{C}$} \iff g_{st}(K_n) = 0.
	\end{equation*}
\end{cor}

\begin{cor}\label{corexconcorder}
	$K_n$ is of infinite order in $\mathcal{C}$ except for $n = 0, 1, 2$.
\end{cor}

For arbitrary knots $K$, it is an open question whether $g_{st}(K) = 0$ implies that $K$ is torsion in the knot concordance group $\mathcal{C}$.

It is important to note that the strength of the lower bound in \cref{corexmain} depends on the choice of prime factor $p$; a priori, there is no preferred choice of $p$ to obtain the strongest bound. In order to obtain the best result, one has to compute the lower bound for all primes $p$ in the prime decomposition of $4n+1$ and then compare. In \cref{tab:table4} we have computed the lower bounds given by \cref{corexmain} for the twist knots $K_5$, $K_{11}$, $K_{16}$, $K_{21}$ and $K_{400}$.

\begin{table}[H]
		\centering
		\captionsetup{width=.8\linewidth, font=small}
		\begin{tabular}{c|c|c|c|c|c|c||c}
		$K_n$ $\backslash$ $p$ & $3$ & $5$ & $7$ & $13$ & $17$ & $89$ & $4n+1$ \\ 
		\hline
		$K_5$ & $1/5$ & $0$ & $1/5$ & $0$ & $0$ & $0$ & $21 = 3\cdot 7\phantom{0}$ \\
		\hline
		$K_{11}$ & $1/3$ & $1/3$ & $0$ & $0$ & $0$ & $0$ & $45 = 5\cdot 9\phantom{0}$ \\
		\hline
		$K_{16}$ & $0$ & $3/8$ & $0$ & $4/11$ & $0$ & $0$ & $65 = 5\cdot 13$ \\
		\hline
		$K_{21}$ & $0$ & $2/5$ & $0$ & $0$ & $7/18$ & $0$ & $85 = 5\cdot 17$ \\
		\hline
		$K_{400}$ & $22/45$ & $0$ & $0$ & $0$ & $0$ & $67/138$ & $801 = 9\cdot 89$ \\
		\hline
		\end{tabular}%
		\caption{Examples of lower bounds for the stable $4$-genus of twist knots obtained from \cref{mainthm}.}
		\label{tab:table4}
\end{table}

While the bounds in \cref{corexmain} are directly obtained from the main theorem and are the strongest that we currently know, they are not particularly easy to grasp. By estimating further we obtain a weaker result that holds for all twist knots simultaneously and is easier to grasp.

\begin{cor}[\cref{corexmain}, weakened]\label{corexadd}
	Let $K_n$ with $n \in \mathbb{N}$, be any twist knot. Then
	\begin{equation*}
		g_{st}(K_n) \geq \frac{1}{2} - \frac{6}{2n+7}.
	\end{equation*}
\end{cor}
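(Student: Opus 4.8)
The plan is to start from the exact formula for $L$ derived just above Corollary \ref{corexmain}, namely the two-case expression in terms of $p$ and $q = (4n+1)/p$, and show that in either case the bound from Corollary \ref{corexmain} dominates $\tfrac{1}{2} - \tfrac{6}{2n+7}$. The strategy is to take the \emph{worst} choice of prime factor, which will be $p = 4n+1$ itself when $4n+1$ is prime (so that $q = 1$); intuitively a single large prime factor gives the least room for estimation, and I expect this to be the extremal case that forces the universal bound. So first I would specialize the formula for $L$ to $q = 1$, obtaining $L = \tfrac{1}{6}(p-1)(p-5)$ when $(p-1)/2$ is even and $L = \tfrac{1}{6}(p^2 - 6p - 7) = \tfrac{1}{6}(p-7)(p+1)$ when $(p-1)/2$ is odd, and similarly specialize the two-case bound in Corollary \ref{corexmain}.

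Next I would substitute $p = 4n+1$ (i.e. $q=1$) into the two bounds from Corollary \ref{corexmain}, giving the rational functions $\tfrac{p-5}{2(p+19)}$ and $\tfrac{p^2 - 6p - 7}{2(p^2 + 12p - 31)}$ respectively, and then rewrite each in the form $\tfrac{1}{2} - (\text{error})$ by performing the division. For instance $\tfrac{p-5}{2(p+19)} = \tfrac{1}{2} - \tfrac{24}{2(p+19)} = \tfrac{1}{2} - \tfrac{12}{p+19}$. With $p = 4n+1$ this is $\tfrac{1}{2} - \tfrac{12}{4n+20} = \tfrac{1}{2} - \tfrac{3}{n+5}$, and I would then check the elementary inequality $\tfrac{3}{n+5} \le \tfrac{6}{2n+7}$, equivalently $3(2n+7) \le 6(n+5)$, i.e. $21 \le 30$, which holds. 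The odd case is handled the same way: carry out the polynomial division of $\tfrac{p^2 - 6p - 7}{2(p^2 + 12p - 31)}$ to express it as $\tfrac{1}{2} - \tfrac{9p - 12}{p^2 + 12p - 31}$, specialize to $p = 4n+1$, and verify the resulting error term is at most $\tfrac{6}{2n+7}$ for all relevant $n$.

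The remaining issue is that $p = 4n+1$ is only \emph{a} valid choice of prime when $4n+1$ is actually prime; in general Corollary \ref{corexmain} holds for \emph{any} prime factor $p$ of $4n+1$, and the statement of Corollary \ref{corexadd} must hold for all $n$ simultaneously. So I would argue that among all prime factors $p$, the combined quantity $n \mapsto$ (bound) is minimized precisely when $4n+1$ is prime, i.e. the general bound with a composite $4n+1$ is no worse than the prime case for the same $n$. The cleanest way to see this is to note that for a fixed $n$ the bound in Corollary \ref{corexmain} is increasing in the ``effective'' value $pq + q = (4n+1) + q$ (first case) and likewise in the second case, and that any proper factor $p < 4n+1$ makes $q > 1$, pushing the bound \emph{up}; hence the prime case $q = 1$ is genuinely the minimum and a bound established there is valid for every $n$. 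I would finish by noting that the three exceptional knots $K_0, K_1, K_2$ (where $L = 0$) satisfy the weakened inequality trivially, since for small $n$ the right-hand side $\tfrac{1}{2} - \tfrac{6}{2n+7}$ is nonpositive.

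The main obstacle I anticipate is \textbf{justifying the reduction to the worst-case prime factor} rigorously: one must be careful that monotonicity of the bound in the relevant parameters really does single out $q = 1$ as extremal, and that the two cases (parity of $(p-1)/2$) are both dominated by the single clean expression $\tfrac{1}{2} - \tfrac{6}{2n+7}$. The arithmetic of the polynomial divisions and the final elementary inequalities is routine, but assembling them so that one formula covers \emph{all} $n$ and \emph{all} admissible $p$ — rather than a case-by-case check — is where the care is needed.
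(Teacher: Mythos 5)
Your endgame arithmetic is essentially the paper's: in both cases the bound from Corollary \ref{corexmain} is rewritten as $\tfrac12$ minus an error term, and that error term is compared with $\tfrac{6}{2n+7}$ by an elementary cross-multiplication. (Minor slip: at $q=1$ the second denominator is $2(p^2+18p-31)$, not $2(p^2+12p-31)$, so the error term is $\tfrac{12(p-1)}{p^2+18p-31}$; the final inequality then reduces to $p\ge 3$.) The structural difference is that the paper never reduces to the prime case $q=1$: it keeps $p$ and $q$ general, writes the case-2 bound as $\tfrac12-\tfrac{12}{4n+q+19-\frac{12}{p-1}}$, and applies the two \emph{independent} estimates $q\ge 1$ and $\tfrac{12}{p-1}\le 6$ in one step, while case 1 needs only $q\ge 1$ and yields the stronger $\tfrac12-\tfrac{3}{n+5}$.

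The gap is exactly the step you flagged, the reduction to the worst case $q=1$. Your justification --- that the bound is increasing in $pq+q=(4n+1)+q$ ``and likewise in the second case'' --- is valid only in case 1. The case-2 bound is not a function of $pq+q$ alone: along the constraint $pq=4n+1$, increasing $q$ decreases $p$, which pushes the denominator $4n+q+19-\tfrac{12}{p-1}$ back down, so the two effects compete. In fact the extremality claim is false as stated: for $n=2$ one has $4n+1=9$, $p=q=3$, $(p-1)/2$ odd, and the bound equals $\tfrac{27-18-3-6}{2\cdot 48}=0$, strictly smaller than the value $\tfrac12-\tfrac{3}{7}=\tfrac{1}{14}$ that your $q=1$ analysis would assign. (This is harmless for the corollary itself, since $\tfrac12-\tfrac{6}{11}<0$, but it refutes the monotonicity lemma you rely on.) Note also that $q=1$ forces $p=4n+1$, hence $(p-1)/2=2n$ even, so your case-2 base point is never realized by an admissible prime factor; evaluating the formula there is legitimate only after the extremality is actually proved. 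Verifying that $n=2$ is the sole failure requires essentially the same inequalities $q\ge 1$ and $p\ge 3$ that the paper uses directly, so the reduction buys nothing; the fix is simply to drop it and estimate the error term for general $p$ and $q$ as above.
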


\begin{prf}
	The result is obtained by estimating the two bounds given in \cref{corexmain}. Recall that $p$ is a prime dividing $4n+1$ and $q = (4n+1)/p$. Observe that $3 \leq p \leq 4n+1$, $1 \leq q \leq (4n+1)/3$, and $pq = 4n+1$.
	\begin{enumerate}[1.)]
		\item In the first case of \cref{corexmain},
			\begin{equation}\label{eq:corweak1}
			\begin{aligned}[b]
				g_{st}(K_n) & \geq \frac{pq+q-6}{2(pq+q+18)} \\ &
				= \frac{pq+q+18}{2(pq+q+18)} - \frac{24}{2(pq+q+18)} \\ &
				= \frac{1}{2} - \frac{12}{4n+q+19} \\ &
				\geq \frac{1}{2} - \frac{3}{n+5}.
			\end{aligned}
			\end{equation}
			The last inequality is obtained by estimating $q$ from below with $1$.
		\item In the second case of \cref{corexmain},
			\begin{equation}\label{eq:corweak2}
			\begin{aligned}[b]
				g_{st}(K_n) & \geq \frac{p^2q-6p-q-6}{2(p^2q+18p-q-30)} \\ &
				= \frac{1}{2} - \frac{12(p-1)}{p^2q+18p-q-30} \\ &
				= \frac{1}{2} - \frac{12(p-1)}{(4n+1)p+18p-q-30} \\ &
				= \frac{1}{2} - \frac{12}{4n+q+19-\frac{12}{p-1}} \\ &
				\geq \frac{1}{2} - \frac{6}{2n+7}.
			\end{aligned}
			\end{equation}
			The third equality is obtained by using
			\begin{equation*}
			\begin{split}
				(4n+1)p + 18p - q - 30 & = (4n+1)p + 18p - 4n+1 - 18 \\ &
				\quad\, + pq -q - 12 \\ &
				= (p-1)\Big(4n+1 + q + 18 - \frac{12}{p-1}\Big),
			\end{split}
			\end{equation*}
			while the last inequality is obtained by estimating $q$ from below with $1$ and $12/(p-1)$ from above with $12/2$ by using $p \geq 3$.
	\end{enumerate}
	Comparing the lower bounds obtained in \cref{eq:corweak1} and \cref{eq:corweak2}, we see that
	\begin{equation*}
		\frac{1}{2} - \frac{3}{n+5} \geq \frac{1}{2} - \frac{6}{2n+7}
	\end{equation*}
	for all $n \in \mathbb{N}$, and the result follows.\myqed
\end{prf}

It is immediate from \cref{corexadd} that for growing $n$, the bound tends towards $1/2$. This implies that the stronger bounds in \cref{corexmain} also tend towards $1/2$ as $n$ grows, since $1/2$ is an upper bound for the lower bound in the main theorem. We will use this fact in the next section to show that there exists an infinite subfamily of twist knots with stable $4$-genus close to but not greater than $1/2$.

\subsection{Twists knots with stable $4$-genus close to but not greater\break than $1/2$}\label{secexupperbound}

A recent result by Baader and Lewark \cite{bls4g} implies that\break $g_{st}(K_n) \leq 2/3$ for any $n \in \mathbb{N}$ (see \cite[Lemma 5]{bls4g}). The idea is to take the three-fold connected sum $3K_n$ with Seifert surface $3\Sigma$, where $\Sigma$ is a genus-minimal Seifert surface for $K_n$, and find a subgroup of rank two of  $H_1(3\Sigma; \mathbb{Z})$ on which the Seifert form has the matrix
\begin{equation}\label{eq:s4gub}
	\left(\begin{matrix}
		0 & 1 \\
		0 & c
	\end{matrix}\right)
\end{equation}
for some $c \in \mathbb{Z}$. As the proof of \cite[Lemma 5]{bls4g} shows, one can always find such a subgroup when taking at least three copies of $K_n$. In this setting, Baader and Lewark show that one can achieve a situation in which Freedman's disc theorem \cite{fdisk, fqbook} can be applied to obtain $g_4(3K_n) \leq g(3K_n) - 1 = 2$, implying that $g_{st}(K_n) \leq 2/3$.

A natural question that arises is under what conditions one can find such a subgroup of rank two starting with two copies of the knot instead of three. Thus, let $K_n$ be any twist knot with its standard genus-one Seifert surface $\Sigma$ and Seifert matrix
\begin{equation*}
	A = \left(\begin{matrix}
		1 & 1 \\
		0 & -n
	\end{matrix}\right).
\end{equation*}
The two-fold connected sum $2K_n$ has then $2\Sigma$ as a Seifert surface with Seifert matrix the block sum $\bar A \coloneqq A \oplus A$. Consider the vectors
\begin{equation*}
	v = (1, 0, x, y)^\top,\ w = (0, 1, 0, 0)^\top\in H_1(2\Sigma; \mathbb{Z}),
\end{equation*}
where $x, y \in \mathbb{Z}$ are yet to be found. Independent of the choice of $x$ and $y$, we have
\begin{equation*}
	v^\top \bar Aw = 1,\ w^\top \bar Av = 0,\ w^\top \bar Aw = -n.
\end{equation*}
Thus, it remains to find $x, y \in \mathbb{Z}$ such that $v^\top \bar Av = 0$; that is
\begin{equation*}
	v^\top \bar Av = x^2 + xy - ny^2 + 1 = 0.
\end{equation*}
Similar to the proof of \cite[Lemma 4]{bls4g}, we complete the square to obtain
\begin{equation*}
\begin{split}
	x^2 + xy - ny^2 & = \Big(x + \frac{y}{2}\Big)^2 - (4n+1)\Big(\frac{y}{2}\Big)^2 \\ &
	= \overline x^2 - (4n+1) \overline y^2,
\end{split}
\end{equation*}
where in the last equation we substituted $\overline y = y/2$ and $\overline x = x + \overline y$. We obtain
\begin{equation*}
	v^\top \bar Av = 0 \iff \overline x^2 - (4n+1) \overline y^2 = -1.
\end{equation*}
The equation $\overline x^2 - (4n+1) \overline y^2 = -1$ is generally known as a negative Pell equation. If this equation has a solution $\overline x, \overline y \in \mathbb{Z}$, then setting $x = \overline x - \overline y$ and $y = 2\overline y$ in $v$ gives us a vector such that $v^\top \bar Av = 0$. It follows that the Seifert form of $2K_n$ restricted to the rank-two subgroup spanned by $v$ and $w$ is of the form
\begin{equation*}
	\left(\begin{matrix}
		0 & 1 \\
		0 & c
	\end{matrix}\right)
\end{equation*}
for some $c \in \mathbb{Z}$. Proceeding as in the proof of \cite[Lemma 5]{bls4g}, we obtain
\begin{equation*}
	g_4(2K_n) \leq 1 \implies g_{st}(K_n) \leq \frac{1}{2},
\end{equation*}
provided that for $4n+1$, the negative Pell equation has an integer valued solution. We summarize the above observations in the following proposition.

\begin{prop}\label{propex}
	Let $n \in \mathbb{N}$ be such that the negative Pell equation\break $x^2 - (4n+1)y^2 = -1$ has a solution $x, y \in \mathbb{Z}$. Then
	\begin{equation*}
		g_{st}(K_n) \leq \frac{1}{2}.\myqed
	\end{equation*}
\end{prop}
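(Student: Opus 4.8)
The plan is to deduce the bound from a single $4$-genus computation on the two-fold connected sum. Since the topological $4$-genus is subadditive under connected sum (boundary-connect-sum the two locally flat surfaces in $B^4$, and the genera add), the sequence $m \mapsto g_4(mK_n)$ is subadditive, so by Fekete's lemma the defining limit equals the infimum and in particular $g_{st}(K_n) = \inf_m g_4(mK_n)/m \leq g_4(2K_n)/2$. Hence it suffices to prove $g_4(2K_n) \leq 1$; the bound $g_{st}(K_n) \leq 1/2$ then follows at once.

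To bound $g_4(2K_n)$ I would follow the template of Baader and Lewark \cite{BL}. The surface $2\Sigma$ is a genus-two Seifert surface for $2K_n$ with block Seifert matrix $\bar A = A \oplus A$, and the goal is to find a rank-two direct summand of $H_1(2\Sigma;\mathbb{Z})$ on which the Seifert form is represented by $\left(\begin{smallmatrix} 0 & 1 \\ 0 & c \end{smallmatrix}\right)$. Such a summand is carried by an embedded genus-one subsurface whose boundary is a knot with this Seifert matrix; computing $\det(t^{1/2}B - t^{-1/2}B^\top)$ for $B = \left(\begin{smallmatrix} 0 & 1 \\ 0 & c \end{smallmatrix}\right)$ gives $1$, so that knot has trivial Alexander polynomial and hence bounds a locally flat disc in $B^4$ by Freedman's disc theorem \cite{FQ, Freedman}. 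Surgering $2\Sigma$ along this disc as in the proof of Lemma 5 of \cite{BL} removes one handle, producing a locally flat genus-one surface in $B^4$ with boundary $2K_n$, i.e. $g_4(2K_n) \leq 1$.

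It remains to produce the summand, and this is pure bookkeeping. I would take $v = (0,1,0,0)^\top$ and $w = (1,0,x,y)^\top$ with $x,y$ to be chosen; the minor in the first two coordinates equals $-1$, so $\{v,w\}$ spans a direct summand regardless of $x,y$. Evaluating $\bar A$ on this pair, the two off-diagonal pairings and one diagonal entry are fixed independently of $x,y$, and the only remaining condition is that the other self-pairing vanish, which amounts to $x^2 + xy - n y^2 = -1$. Completing the square rewrites the left-hand side as $(x + y/2)^2 - (4n+1)(y/2)^2$, so with $\bar x = x + y/2$ and $\bar y = y/2$ the condition becomes the negative Pell equation $\bar x^2 - (4n+1)\bar y^2 = -1$. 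By hypothesis this has an integer solution (necessarily with $\bar y \neq 0$), and setting $x = \bar x - \bar y$, $y = 2\bar y$ keeps $x,y$ integral and realizes the desired Seifert form.

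The routine parts are the algebra of the last paragraph and the subadditivity reduction. The substantive step, and the main obstacle, is the geometric one: turning the algebraic vanishing condition into an actual locally flat genus reduction. This rests entirely on Freedman's disc theorem, whose applicability depends on the subknot having trivial Alexander polynomial (equivalently good fundamental group), and one must also verify that $v$ and $w$ can be represented by simple closed curves meeting once, bounding an embedded genus-one subsurface whose surface framing matches the zero self-linking, so that the surgery is legitimate. This is precisely the point at which the argument is exclusive to the topological category and has no smooth counterpart.
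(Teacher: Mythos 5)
Your proposal is correct and takes essentially the same route as the paper: the paper also reduces to showing $g_4(2K_n)\leq 1$, uses the same vectors $v=(0,1,0,0)^\top$ and $w=(1,0,x,y)^\top$ in $H_1(2\Sigma;\mathbb{Z})$, completes the square to reach the negative Pell equation, and invokes the surgery/Freedman argument of Lemma 5 of \cite{BL}. You merely spell out two steps the paper leaves implicit (the subadditivity reduction $g_{st}\leq g_4(2K_n)/2$ and the trivial-Alexander-polynomial justification for applying the disc theorem), which is fine.
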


As mentioned in the introduction, a necessary and sufficient condition for the existence of a solution of the negative Pell equation is that the continued fraction of $\sqrt{4n+1}$ has odd period length \cite{rtpell}. This is the case, for example, if $4n+1 = p^k$, where $p$ is a prime such that $p \equiv 1 \mod 4$ and $k \in \mathbb{N}$ \cite{rtpell}. This yields, together with the lower bound given in \cref{corexmain}, the infinite subfamily of twist knots with $g_{st}$ close to but not greater than $1/2$.

Note that in the approach above, we specified two explicit vectors $v$ and $w$ and derived a sufficient condition for them to span a rank-two subgroup on which the Seifert matrix has the desired form \cref{eq:s4gub}. A natural question to ask is  under what circumstances such a subgroup exists in general. Indeed, although the solvability of the negative Pell equation is a sufficient condition, it is not necessary. For example, if $n = 51$, then $4n+1 = 205$, but ${x^2 - 205y^2 = -1}$ has no solution. Yet, the Seifert form of $2K_{51}$ restricted to the rank-two subgroup spanned by the vectors $(13, 2, 3, 0)^\top$ and $(14, 2, -2, 1)^\top$ gives the desired matrix \cref{eq:s4gub}, so $g_{st}(K_{51}) \leq 1/2$.

More generally, there might also be other, different methods to obtain the upper bound given by $1/2$. We do not know of a full characterization of twist knots for which $g_{st} \leq 1/2$ holds.

\bibliographystyle{myamsalpha}
\bibliography{References.bib}

\providecommand{\bysame}{\leavevmode\hbox to3em{\hrulefill}\thinspace}
\providecommand{\MR}{\relax\ifhmode\unskip\space\fi MR }
\providecommand{\MRhref}[2]{%
  \href{http://www.ams.org/mathscinet-getitem?mr=#1}{#2}
}
\providecommand{\href}[2]{#2}
\begin{thebibliography}{{Tam}02}

\bibitem[BL17]{bls4g}
S.~{Baader} and L.~{Lewark}: \emph{{The stable \(4\)-genus of alternating
  knots}}, {Asian J. Math.} \textbf{21} (2017), no.~6, 1183--1190 (English).
  \xox{ZB}{1386.57006}.

\bibitem[CG75]{cg1}
A.~J. {Casson} and C.~M. {Gordon}: \emph{{Cobordism of classical knots}}, {in
  ''À la recherche de la topologie perdue``, ed. L. Guillou and A. Marin,
  Prog. Math., Vol. 62, (1986)}, 1975.

\bibitem[CG78]{cg2}
\bysame: \emph{{On slice knots in dimension three}}, {Algebr. geom. Topol.,
  Stanford/Calif. 1976, Proc. Symp. Pure Math., Vol. 32, Part 2, 39-53
  (1978).}, 1978. \xox{ZB}{0394.57008}.

\bibitem[{Con}17]{conwaycg}
A.~{Conway}: \emph{{Algebraic concordance and Casson-Gordon invariants}},
  \url{http://www.unige.ch/math/folks/conway/Notes/AlgebraicConcordanceCassonGordon.pdf},
  2017.

\bibitem[CFH16]{cfhlf}
A.~{Conway}, S.~{Friedl}, and G.~{Herrmann}: \emph{{Linking forms revisited}},
  {Pure Appl. Math. Q.} \textbf{12} (2016), no.~4, 493--515 (English).
  \xox{ZB}{1401.57010}.

\bibitem[{Dav}12]{davistor}
C.~W. {Davis}: \emph{{Von Neumann rho invariants and torsion in the topological
  knot concordance group}}, {Algebr. Geom. Topol.} \textbf{12} (2012), no.~2,
  753--789 (English). \xox{ZB}{1250.57007}.

\bibitem[Fre84]{fdisk}
M.~H. Freedman: \emph{The disk theorem for four-dimensional manifolds}, Proc.
  {Int}. {Congr}. {Math}., {Warszawa} 1983, {Vol}. 1, 647-663 (1984)., 1984.
  \xox{ZB}{0577.57003}.

\bibitem[FQ90]{fqbook}
M.~H. {Freedman} and F.~S. {Quinn}: \emph{{Topology of 4-manifolds}}, vol.~39,
  Princeton, NJ: Princeton University Press, 1990 (English).
  \xox{ZB}{0705.57001}.

\bibitem[{Gil}93]{gilmer3}
P.~{Gilmer}: \emph{{Classical knot and link concordance}}, {Comment. Math.
  Helv.} \textbf{68} (1993), no.~1, 1--19 (English). \xox{ZB}{0805.57005}.

\bibitem[{Gil}82]{gilmer2}
P.~M. {Gilmer}: \emph{{On the slice genus of knots}}, {Invent. Math.}
  \textbf{66} (1982), 191--197 (English). \xox{ZB}{0495.57002}.

\bibitem[{Gil}83]{gilmer1}
\bysame: \emph{{Slice knots in \(S^ 3\)}}, {Q. J. Math., Oxf. II. Ser.}
  \textbf{34} (1983), 305--322 (English). \xox{ZB}{0542.57007}.

\bibitem[{Kim}05]{kimcg}
S.-G. {Kim}: \emph{{Polynomial splittings of Casson-Gordon invariants}}, {Math.
  Proc. Camb. Philos. Soc.} \textbf{138} (2005), no.~1, 59--78 (English).
  \xox{ZB}{1077.57005}.

\bibitem[{Lev}69]{levine1}
J.~{Levine}: \emph{{Invariants of knot cobordism}}, {Invent. Math.} \textbf{8}
  (1969), 98--110 (English). \xox{ZB}{0179.52401}.

\bibitem[{Lis}07]{lisca}
P.~{Lisca}: \emph{{Sums of Lens spaces bounding rational balls}}, {Algebr.
  Geom. Topol.} \textbf{7} (2007), 2141--2164 (English). \xox{ZB}{1185.57015}.

\bibitem[{Lit}79]{litherland}
R.~A. {Litherland}: \emph{{Signatures of iterated torus knots}}, {Topology of
  low-dimensional manifolds, Proc. 2nd Sussex Conf. 1977, Lect. Notes Math.
  722, 71-84 (1979).}, 1979. \xox{ZB}{0412.57002}.

\bibitem[{Lit}84]{litherland2}
\bysame: \emph{{Cobordism of satellite knots}}, {Four-manifold theory, Proc.
  AMS-IMS-SIAM Joint Summer Res. Conf., Durham/N.H. 1982, Contemp. Math. 35,
  327-362 (1984).}, 1984. \xox{ZB}{0563.57001}.

\bibitem[LN16]{livnaik}
C.~{Livingston} and S.~{Naik}: \emph{{Introduction to Knot Concordance (work in
  progress)}}, 2016.  Available online at
  \url{https://citeseerx.ist.psu.edu/viewdoc/download?doi=10.1.1.396.3952&rep=rep1&type=pdf}.

\bibitem[{Liv}10]{livstable}
C.~{Livingston}: \emph{{The stable 4-genus of knots}}, {Algebr. Geom. Topol.}
  \textbf{10} (2010), no.~4, 2191--2202 (English). \xox{ZB}{1213.57015}.

\bibitem[LN99]{lntor1}
C.~{Livingston} and S.~{Naik}: \emph{{Obstructing four-torsion in the classical
  knot concordance group}}, {J. Differ. Geom.} \textbf{51} (1999), no.~1, 1--12
  (English). \xox{ZB}{1025.57013}.

\bibitem[LN01]{lntor2}
\bysame: \emph{{Knot concordance and torsion}}, {Asian J. Math.} \textbf{5}
  (2001), no.~1, 161--167 (English). \xox{ZB}{1012.57005}.

\bibitem[{Mur}65]{murasugi3}
K.~{Murasugi}: \emph{{On a certain numerical invariant of link types}}, {Trans.
  Am. Math. Soc.} \textbf{117} (1965), 387--422 (English).
  \xox{ZB}{0137.17903}.

\bibitem[{Nai}96]{naikcg}
S.~{Naik}: \emph{{Casson-Gordon invariants of genus one knots and concordance
  reverses}}, {J. Knot Theory Ramifications} \textbf{5} (1996), no.~5, 661--677
  (English). \xox{ZB}{0890.57014}.

\bibitem[RT04]{rtpell}
P.~J. {Rippon} and H.~{Taylor}: \emph{{Even and odd periods in continued
  fractions of square roots}}, {Fibonacci Q.} \textbf{42} (2004), no.~2,
  170--180 (English). \xox{ZB}{1052.11007}.

\bibitem[{Tam}02]{tamulis}
A.~{Tamulis}: \emph{{Knots of ten or fewer crossings of algebraic order 2}},
  {J. Knot Theory Ramifications} \textbf{11} (2002), no.~2, 211--222 (English).
  \xox{ZB}{1003.57007}.

\bibitem[{Tri}69]{tristram}
A.~G. {Tristram}: \emph{{Some cobordism invariants for links}}, {Proc. Camb.
  Philos. Soc.} \textbf{66} (1969), 251--264 (English). \xox{ZB}{0191.54703}.

\end{thebibliography}


\begin{thebibliography}{99}
	\bibitem{BL} S. Baader, L. Lewark, \textit{The stable 4-genus of alternating knots}. Asian J. Math. 21 (2017), no. 6, 1183–1190.
	\bibitem{CG1} A. J. Casson, C. McA. Gordon, \textit{Cobordism of classical knots}. With an appendix by P. M. Gilmer. Progr. Math., 62, À la recherche de la topologie perdue, 181–199, Birkhäuser Boston, Boston, MA, 1986.
	\bibitem{CG2} A. J. Casson, C. McA. Gordon, \textit{On slice knots in dimension three}. Algebraic and geometric topology (Proc. Sympos. Pure Math., Stanford Univ., Stanford, Calif., 1976), Part 2, pp. 39–53, Proc. Sympos. Pure Math., XXXII, Amer. Math. Soc., Providence, R.I., 1978.
	\bibitem{Conway} A. Conway, \textit{Algebraic Concordance and Casson-Gordon Invariants}. Notes of a reading group held at the university of Geneva, Geneva, 2017, available online at www.unige.ch/math/folks/conway/Notes/AlgebraicConcordanceCassonGordon.pdf
	\bibitem{CFH} A. Conway, S. Friedl, G. Herrmann, \textit{Linking forms revisited}. Pure Appl. Math. Q. 12 (2016), no. 4, 493–515.
	\bibitem{Davis} C. W. Davis, \textit{Von Neumann rho invariants and torsion in the topological knot concordance group}. Algebr. Geom. Topol. 12 (2012), no. 2, 753–789.
	\bibitem{FQ} M. H. Freedman, F. Quinn, \textit{Topology of 4-manifolds}. Princeton Mathematical Series, 39. Princeton University Press, Princeton, NJ, 1990.
	\bibitem{Freedman} M. H. Freedman, \textit{The disk theorem for four-dimensional manifolds}. Proceedings of the International Congress of Mathematicians, Vol. 1, 2 (Warsaw, 1983), 647–663, PWN, Warsaw, 1984.
	\bibitem{Gilmer1} P. M. Gilmer, \textit{Classical knot and link concordance}. Comment. Math. Helv. 68 (1993), no. 1, 1–19.
	\bibitem{Gilmer2} P. M. Gilmer, \textit{On the slice genus of knots}. Invent. Math. 66 (1982), no. 2, 191–197.
	\bibitem{Gilmer3} P. M. Gilmer, \textit{Slice knots in $S^3$}. Quart. J. Math. Oxford Ser. (2) 34 (1983), no. 135, 305–322.
	\bibitem{Kim} S.-G. Kim, \textit{Polynomial splittings of Casson-Gordon invariants}. Math. Proc. Cambridge Philos. Soc. 138 (2005), no. 1, 59–78.
	\bibitem{Levine} J. Levine, \textit{Invariants of knot cobordism}. Invent. Math. 8 (1969), 98–110; addendum, ibid. 8 (1969), 355.
	\bibitem{Lisca} P. Lisca, \textit{Sums of lens spaces bounding rational balls}. Algebr. Geom. Topol. 7 (2007), 2141–2164.
	\bibitem{Litherland} R. A. Litherland, \textit{Signatures of iterated torus knots}. Topology of low-dimensional manifolds (Proc. Second Sussex Conf., Chelwood Gate, 1977), pp. 71–84, Lecture Notes in Math., 722, Springer, Berlin, 1979.
	\bibitem{Livingston} C. Livingston, \textit{The stable 4-genus of knots}. Algebr. Geom. Topol. 10 (2010), no. 4, 2191–2202.
	\bibitem{LN1} C. Livingston, S. Naik, \textit{Introduction to Knot Concordance}. Work in progress, http://pages.iu.edu/$\sim$livingst/Book.pdf, draft: 26 Aug. 2016.
	\bibitem{LN2} C. Livinston, S. Naik, \textit{Knot concordance and torsion}. Asian J. Math. 5 (2001), no. 1, 161–167.
	\bibitem{LN3} C. Livingston, S. Naik, \textit{Obstructing four-torsion in the classical knot concordance group}. J. Differential Geom. 51 (1999), no. 1, 1–12. 
	\bibitem{Murasugi} K. Murasugi, \textit{On a certain numerical invariant of link types}. Trans. Amer. Math. Soc. 117 (1965), 387–422.
	\bibitem{Naik} S. Naik, \textit{Casson-Gordon invariants of genus one knots and concordance to reverses}. J. Knot Theory Ramifications 5 (1996), no. 5, 661–677.
	\bibitem{RT} P. J. Rippon, H. Taylor, \textit{Even and odd periods in continued fractions of square roots}. Fibonacci Quart. 42 (2004), no. 2, 170–180.
	\bibitem{Tamulis} A. Tamulis, \textit{Knots of ten or fewer crossings of algebraic order 2}. J. Knot Theory Ramifications 11 (2002), no. 2, 211–222.
	\bibitem{Tristram} A. G. Tristram, \textit{Some cobordism invariants for links}. Proc. Cambridge Philos. Soc. 66 (1969), 251–264.
\end{thebibliography}

\end{document}